\DeclareSymbolFont{extraup}{U}{zavm}{m}{n}
\DeclareMathSymbol{\varheart}{\mathalpha}{extraup}{86}
\DeclareMathSymbol{\vardiamond}{\mathalpha}{extraup}{87}
\DeclareMathSymbol{\vardiamond}{\mathalpha}{extraup}{87}
\newcommand{\commment}[1]{}
\renewcommand{\phi}{\varphi}
\renewcommand{\emptyset}{\varnothing}
\renewcommand{\epsilon}{\varepsilon}
\newcommand{\nomi}{\mathbf{i}}
\newcommand{\nomj}{\mathbf{j}}
\newcommand{\cnomm}{\mathbf{m}}
\newcommand{\cnomn}{\mathbf{n}}
\newcommand{\bigamp}{\mathop{\mbox{\Large \&}}}
\theoremstyle{plain}
\newtheorem{thm}{Theorem}
\newtheorem{theorem}{Theorem}[section]
\newtheorem{example}[theorem]{Example}
\newtheorem{proposition}[thm]{Proposition}
\newtheorem{lemma}[thm]{Lemma}
\theoremstyle{definition}
\newtheorem{definition}[thm]{Definition}
\title{Correspondence Theory for Generalized Modal Algebras}
\author{Zhiguang Zhao}
\date{}
\begin{document}
\maketitle
\begin{abstract}
\noindent
In the present paper, we give a systematic study of the correspondence theory of generalized modal algebras and generalized modal spaces, in the spirit of \cite{Ce05,Ce22}. The special feature of the present paper is that in the proof of the (right-handed) topological Ackermann lemma, the admissible valuations are not the clopen valuations anymore, but values in the set $\mathcal{D}_{\mathcal{K}}(X)$ which are only closed and satisfy additional properties, not necessarily open. This situation is significantly different from existing settings using Stone/Priestley-like dualities, where all admissible valuations are clopen valuations.

{\em Keywords:} generalized modal algebra, generalized modal space, duality theory, correspondence theory

\end{abstract}

\section{Introduction}

Generalized Boolean algebras are the $(\to,\land,\lor,\top)$-subreducts of Boolean algebras, i.e.\ its logic counterpart is the $(\neg,\bot)$-free fragment of classical propositional logic. It is first studied by Stone \cite{St35}. Our treatment of generalized Boolean algebras follow the spirit of Tarski algebras \cite{Ab67a,Ab67b,Ce05,Ce19b,Ce19a,Ce22,CeCa08}, which are the $(\to,\lor,\top)$-subreducts of Boolean algebras, therefore our treatment can be seen as Tarski algebras with a conjunction. 

In \cite{Ce05,Ce22}, some formulas defined in the language of modal Tarski algebras and subordination Tarski algebras are characterized in terms of first-order conditions on the dual modal Tarski spaces or subordination Tarski spaces. We follow their approach to study the correspondence theory for generalized modal algebras (which are the $(\to,\land,\lor,\top,\Box)$-subreducts of modal algebras) and generalized modal spaces.

The present paper aims at obtaining similar but more general results in the setting of generalized modal algebras, using algorithmic correspondence theory \cite{CoGhPa14,CoPa12}, which uses an algorithm $\mathsf{ALBA}$ to transform an input formula/inequality in the language of certain algebras into its first-order correspondent on the dual topological spaces. The special feature of the present paper is that in the proof of the right-handed topological Ackermann lemma, the admissible valuations are not the clopen valuations anymore, but values in the set $\mathcal{D}_{\mathcal{K}}(X)$ which are only closed and satisfy additional properties, not necessarily open. This situation is significantly different from the Stone/Priestley-like duality settings, where all admissible valuations are clopen valuations. The present work can also be taken as a step towards correspondence theory for logics which are not based on bounded algebras.

The paper is organized as follows: Section \ref{Sec:Prelim} gives the preliminaries on generalized modal algebras, generalized modal spaces and their duality on the object level. Section \ref{Sec:Syn:Sem} gives the language to describe generalized modal algebras and generalized modal spaces, as well as its semantics. Section \ref{Sec:Prelim:Alg:Cor} gives the expanded language that the algorithm $\mathsf{ALBA}$ manipulate as well as the first-order correspondence language. Section \ref{sec:Sahlqvist} defines the inductive inequalities for our language. Section \ref{Sec:ALBA} describes the algorithm $\mathsf{ALBA}$ that computes the first-order correspondents of the modal formulas. Section \ref{Sec:Success} shows that $\mathsf{ALBA}$ succeeds on inductive inequalities. Section \ref{Sec:Soundness} proves that $\mathsf{ALBA}$ is sound with respect to admissible valuations. Section \ref{Sec:Example} gives an example how $\mathsf{ALBA}$ executes.

\section{Preliminaries}\label{Sec:Prelim}
In the present section, we give preliminaries on generalized Boolean algebras, generalized Stone spaces, generalized modal algebras, generalized modal spaces and their object-level duality. For more details, see \cite{St35}.

\subsection{Generalized Boolean Algebras}\label{Subsec:Tar:Alg}
\begin{definition}
A \emph{generalized Boolean algebra} is a tuple $A=(A,\to,\land,\lor,\top)$ such that $(A,\to,\land,\lor,\top)$ is a Brouwerian algebra, and $\lor$ satisfies $a\lor b=(a\to b)\to b$.
\end{definition}
It is clear that a generalized Boolean algebra satisfies all the classical propositional logic laws that do not involve $\bot$ or $\neg$. In a generalized Boolean algebra we can define an order relation $\leq$ by $a\leq b$ iff $a=a\land b$. A non-empty subset $F$ of $A$ is a called an \emph{filter} if $\top\in F$ and $F$ is upward-closed (i.e.\ if $a\in F$ and $a\leq b$ then $b\in F$) and for any $a,b\in A$, if $a,b\in F$, then $a\land b\in F$. A proper filter $F\subsetneq A$ is \emph{maximal} if for any filter $H$ such that $F\subseteq H$, we have $F=H$ or $H=A$. The set of all maximal filters of $A$ is denoted by $Ul(A)$. Given a full set $X$, we use $Y^{c}$ to denote the complement of $Y$ relative to $X$, i.e.\ $X-Y$.

\subsection{Generalized Stone Spaces}

In what follows, a topological space $(X,\tau_{\mathcal{K}})$ with basis $\mathcal{K}$ will be denoted by $(X,\mathcal{K})$.

\begin{definition}\label{Def:Tarski:Space}
A \emph{generalized Stone space} is a topological space $(X,\mathcal{K})$ such that:
\begin{itemize}
\item For every $x,y\in X$, if $x\neq y$, then there exists $U\in\mathcal{K}$ such that $x\in U$ and $y\notin U$;
\item $\mathcal{K}$ is a basis of compact subsets for the topology $\tau_{\mathcal{K}}$;
\item For any $A,B\in\mathcal{K}$, $A\cap B^{c}\in \mathcal{K}$;
\item For any $A,B\in\mathcal{K}$, $A\cup B\in \mathcal{K}$;
\item For any closed subset $Y$ of $X$, and for any downward-directed subfamily $\mathcal{I}$ of $\mathcal{K}$, if $Y\cap U\neq\emptyset$ for each $U\in I$, then $Y\cap\bigcap\mathcal{I}\neq\emptyset$.
\end{itemize}

Given a generalized Stone space $(X,\mathcal{K})$, the family $\mathcal{D}_{\mathcal{K}}(X)=\{U\subseteq X:U^{c}\in K\}$ with the implication defined by $U\Rightarrow V=U^{c}\cup V$, conjunction defined by intersection, disjunction defined by union, for each $U,V\in\mathcal{D}_{\mathcal{K}}(X)$, is a generalized Boolean algebra called the \emph{dual generalized Boolean algebra} of $(X,\mathcal{K})$.

Condition 5. of Definition \ref{Def:Tarski:Space} can be expressed in the following way (see \cite{Ce22}):

\begin{itemize}
\item For any closed subset $Y$ of $X$, and for any directed subfamily $\mathcal{F}$ of $\mathcal{D}_{\mathcal{K}}(X)$, if $Y\subseteq \bigcup\{U:U\in\mathcal{F}\}$, then $Y\subseteq U$, for some $U\in\mathcal{F}$.
\end{itemize}
\end{definition}

\subsection{Topological Duality between Generalized Boolean Algebras and Generalized Stone Spaces}

If $A$ is a generalized Boolean algebra, then the family $\mathcal{K}_{A}=\{(\sigma(a))^{c}:a\in A\}$ is a basis for a topology $\tau_{\mathcal{K}_{A}}$ defined on $Ul(A)$, where $\sigma(a)=\{P\in Ul(A):a\in P\}$. Generalizing the results given in \cite{CeCa08} we have that the pair $(Ul(A),\mathcal{K}_{A})$ is a generalized Stone space and the map $\sigma:A\to \mathcal{D}_{\mathcal{K}_{A}}(Ul(A))$ is an algebra isomorphism.

Let $(X,\mathcal{K})$ be a generalized Stone space. Then the map $\epsilon:X\to Ul(\mathcal{D}_{\mathcal{K}}(X))$ given by $\epsilon(x)=\{U\in\mathcal{D}_{K}(X): x\in U\}$ is a homeomorphism between the topological spaces $(X,\mathcal{K})$ and $(Ul(\mathcal{D}_{\mathcal{K}}(X)),\mathcal{K}_{\mathcal{D}_{\mathcal{K}}(X)})$.

\subsection{Adding Modality $\Box$}

\begin{definition}
A \emph{generalized modal algebra} is a tuple $A=(A,\to,\land,\lor,\top,\Box)$ such that $(A,\to,\land,\lor,\top)$ is a generalized Boolean algebra, $\Box \top=\top$ and $\Box (a\land b)=\Box a\land \Box b$ for all $a,b\in A$.
\end{definition}

\begin{definition}
A \emph{generalized modal space} is a structure $(X,\mathcal{K},R)$ such that $(X,\mathcal{K})$ is a generalized Stone space, and
\begin{itemize}
\item For each $x\in X$, $R(x)$ is closed;
\item $\Box U=\{x\in X:R(x)\subseteq U\}\in \mathcal{D}_{\mathcal{K}}(X)$, for each $U\in\mathcal{D}_{\mathcal{K}}(X)$.
\end{itemize}
\end{definition}

The duality between generalized Boolean algebras and generalized Stone spaces can be naturally extended to generalized modal algebras and generalized modal spaces.

\subsection{Some Useful Propositions}

The following propositions will be useful in the proof of the topological Ackermann lemma in Section \ref{SubSubSec:Top:Ack:Lemma}.

\begin{lemma}[Generalizing Lemma 2.7 in \cite{Ce22}]\label{Lemma:Pseudo:Compactness}
Let $(X,\mathcal{K})$ be a generalized Stone space. Let $\{U_{i}:i\in I\}$ and $\{V_{j}:j\in J\}$ be non-empty
families of $\mathcal{D}_{\mathcal{K}}(X)$ such that $\bigcap\{U_{i}:i\in I\}\subseteq\bigcup\{V_{j}:j\in J\}$, then there exist $U_1,\ldots,U_n$ and $V_1,\ldots,V_k$ such that $U_1\cap\ldots\cap U_n\subseteq V_1\cup\ldots\cup V_k$.
\end{lemma}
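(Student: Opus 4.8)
The plan is to reduce the infinitary containment to a finite one in two stages, first collapsing the union on the right and then the intersection on the left, using two distinct compactness principles available in a generalized Stone space. Throughout I would keep in mind that every member of $\mathcal{D}_{\mathcal{K}}(X)$ is a closed set whose complement lies in $\mathcal{K}$, that $\mathcal{K}$ is a basis of compact open sets, and that $\mathcal{D}_{\mathcal{K}}(X)$ is closed under finite unions (its disjunction being union).

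First I would set $Y:=\bigcap\{U_i:i\in I\}$. Since each $U_i$ is closed (its complement $U_i^{c}\in\mathcal{K}$ is open), $Y$ is a closed subset of $X$. Because $\mathcal{D}_{\mathcal{K}}(X)$ is closed under finite unions, the family $\mathcal{F}$ of all finite unions $V_{j_1}\cup\ldots\cup V_{j_k}$ of members of $\{V_j:j\in J\}$ is an (upward) directed subfamily of $\mathcal{D}_{\mathcal{K}}(X)$ satisfying $\bigcup\mathcal{F}=\bigcup\{V_j:j\in J\}$. The hypothesis then reads $Y\subseteq\bigcup\mathcal{F}$, so the reformulated Condition 5 of Definition \ref{Def:Tarski:Space} applies and produces a single $W\in\mathcal{F}$ with $Y\subseteq W$; writing $W=V_1\cup\ldots\cup V_k$ discharges the right-hand side.

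It then remains to reduce the intersection on the left. From $\bigcap\{U_i:i\in I\}\subseteq W$ I would pass to complements to obtain $W^{c}\subseteq\bigcup\{U_i^{c}:i\in I\}$. Now $W\in\mathcal{D}_{\mathcal{K}}(X)$, so $W^{c}\in\mathcal{K}$ is compact, while each $U_i^{c}\in\mathcal{K}$ is open; hence $\{U_i^{c}:i\in I\}$ is an open cover of the compact set $W^{c}$ and admits a finite subcover $W^{c}\subseteq U_1^{c}\cup\ldots\cup U_n^{c}$. Taking complements once more yields $U_1\cap\ldots\cap U_n\subseteq W=V_1\cup\ldots\cup V_k$, which is the desired finite containment.

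The routine verifications are the topological bookkeeping: closedness of $Y$, directedness of $\mathcal{F}$ and the identity $\bigcup\mathcal{F}=\bigcup_j V_j$, and the two complementation passages. The only genuine subtlety, and the main point to get right, is recognizing that the two sides must be treated by two different guises of compactness — the directed-union form of Condition 5 collapses the right-hand union, whereas the compactness of the basic set $W^{c}$ collapses the left-hand intersection. A degenerate case (when $W=X$, so the subcover may be empty) is handled by selecting any single $U_{i_0}$, which exists since $I$ is non-empty.
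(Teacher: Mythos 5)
Your proof is correct. The paper itself gives no proof of this lemma (it is stated as a generalization of Lemma 2.7 of \cite{Ce22} and left unproved), so there is nothing to diverge from; your two-stage argument --- using the directed-union reformulation of Condition 5 of Definition \ref{Def:Tarski:Space} to collapse the right-hand union to a single $W=V_1\cup\ldots\cup V_k$, and then the compactness of the basic set $W^{c}\in\mathcal{K}$ to collapse the left-hand intersection --- is the natural one, and all the supporting facts you invoke (closure of $\mathcal{D}_{\mathcal{K}}(X)$ under finite unions, closedness of its members, compactness and openness of members of $\mathcal{K}$) do follow from the definitions, with the empty-subcover case correctly handled by the non-emptiness of $I$.
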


\begin{proposition}[Generalizing Proposition 4.4 in \cite{Ce22}]\label{Prop:RY:Closed}

Let $(X,\mathcal{K},R)$ be a generalized modal space. Then $R[Y]$ is a closed subset for each closed subset $Y$ of $X$.
\end{proposition}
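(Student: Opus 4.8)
The plan is to show that the complement of $R[Y]$ is open by separating, for each $z\notin R[Y]$, the point $z$ from the whole set $R[Y]$ by a single member of $\mathcal{D}_{\mathcal{K}}(X)$. Since every element of $\mathcal{D}_{\mathcal{K}}(X)$ is closed and its complement is a basic open of $\mathcal{K}$, producing such a separating set exhibits an open neighbourhood of $z$ disjoint from $R[Y]$, which is exactly what is needed. Throughout I use that $R[Y]=\bigcup_{y\in Y}R(y)$, that $R(y)$ is closed for every $y\in X$, and that $\Box U=\{x:R(x)\subseteq U\}\in\mathcal{D}_{\mathcal{K}}(X)$ whenever $U\in\mathcal{D}_{\mathcal{K}}(X)$.

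First I would fix $z\notin R[Y]$, so that $z\notin R(y)$ for every $y\in Y$. For each such $y$, since $R(y)$ is closed and $z\notin R(y)$, the point $z$ lies in the open set $R(y)^{c}$, so there is a basic open $B\in\mathcal{K}$ with $z\in B\subseteq R(y)^{c}$; setting $V_{y}:=B^{c}\in\mathcal{D}_{\mathcal{K}}(X)$ gives $R(y)\subseteq V_{y}$ and $z\notin V_{y}$. Rewriting $R(y)\subseteq V_{y}$ as $y\in\Box V_{y}$, and noting $\Box V_{y}\in\mathcal{D}_{\mathcal{K}}(X)$ by the modal-space axiom, I obtain for each $y\in Y$ a member $\Box V_{y}$ of $\mathcal{D}_{\mathcal{K}}(X)$ containing $y$ and built from a set $V_{y}$ that avoids $z$.

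The key step is then a compactness argument via the reformulation of Condition 5 in Definition~\ref{Def:Tarski:Space}. Consider the family $\mathcal{F}=\{\Box U: U\in\mathcal{D}_{\mathcal{K}}(X),\ z\notin U\}\subseteq\mathcal{D}_{\mathcal{K}}(X)$. It is upward directed: if $z\notin U_{1}$ and $z\notin U_{2}$ then $z\notin U_{1}\cup U_{2}\in\mathcal{D}_{\mathcal{K}}(X)$, and $\Box U_{1}\cup\Box U_{2}\subseteq\Box(U_{1}\cup U_{2})$ by monotonicity of $\Box$. Moreover $Y\subseteq\bigcup\mathcal{F}$, since each $y\in Y$ lies in $\Box V_{y}\in\mathcal{F}$. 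As $Y$ is closed, Condition 5 yields a single $U\in\mathcal{D}_{\mathcal{K}}(X)$ with $z\notin U$ and $Y\subseteq\Box U$.

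Finally, $Y\subseteq\Box U$ means $R(y)\subseteq U$ for all $y\in Y$, hence $R[Y]=\bigcup_{y\in Y}R(y)\subseteq U$; since $z\notin U$, the basic open $U^{c}\in\mathcal{K}$ is a neighbourhood of $z$ disjoint from $R[Y]$. As $z\notin R[Y]$ was arbitrary, $(R[Y])^{c}$ is open and $R[Y]$ is closed. I expect the main obstacle to be this compactness step: one must verify that $\mathcal{F}$ is a genuine directed subfamily of $\mathcal{D}_{\mathcal{K}}(X)$ — which is precisely where the modal-space axiom $\Box U\in\mathcal{D}_{\mathcal{K}}(X)$, closure of $\mathcal{D}_{\mathcal{K}}(X)$ under union, and monotonicity of $\Box$ are all required — before Condition 5 can be invoked to collapse the pointwise separations into one uniform $U$.
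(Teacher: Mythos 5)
Your proof is correct. The paper itself gives no argument for Proposition~\ref{Prop:RY:Closed}, deferring to Proposition~4.4 of \cite{Ce22}, and your separation-plus-compactness argument is exactly the expected one: separate $z$ from each $R(y)$ by some $V_y\in\mathcal{D}_{\mathcal{K}}(X)$, pass to the cover of $Y$ by the sets $\Box V_y$, and invoke the reformulated Condition~5 of Definition~\ref{Def:Tarski:Space} to obtain a single $U$ with $Y\subseteq\Box U$ and $z\notin U$. Your device of enlarging the cover to the directed family $\{\Box U: U\in\mathcal{D}_{\mathcal{K}}(X),\ z\notin U\}$ (using closure of $\mathcal{D}_{\mathcal{K}}(X)$ under unions and monotonicity of $\Box$) is a clean way to meet the directedness hypothesis, and all the properties you rely on are indeed available from the definitions of generalized Stone space and generalized modal space.
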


\section{Syntax and Semantics}\label{Sec:Syn:Sem}

In the present section, we give the syntax and semantics of the logic formulas for generalized modal algebras and generalized modal spaces. We follow the presentation of \cite{Zh21c}.

\subsection{Language and Syntax}\label{Subsec:Lan:Syn}

\begin{definition}
Given a countable set $\mathsf{Prop}$ of propositional variables, the generalized modal language $\mathcal{L}$ is defined as follows:
$$\varphi::=p \mid \top \mid\varphi\to\varphi\mid\varphi\land\varphi\mid\varphi\lor\varphi\mid \Box\varphi,$$
where $p\in \mathsf{Prop}$. We use the notation $\overline{p}$ to denote a list of propositional variables and use $\phi(\overline{p})$ to indicate that the propositional variables occur in $\phi$ are all in $\vec p$. We call a formula \emph{pure} if it does not contain propositional variables. We use the notation $\overline{\theta}$ to indicate a finite list of formulas. We use the notation $\theta(\eta/p)$ to indicate uniformly substituting $p$ by $\eta$. 

We will find it convenient to use \emph{inequalities} of the form $\phi\leq\psi$ and \emph{quasi-inequalities} of the form $\phi_1\leq\psi_1\ \&\ \ldots\ \&\ \phi_n\leq\psi_n\  \Rightarrow\ \phi\leq\psi$ in the algorithm. Intuitively, $\phi\leq\psi$ expresses the model-level truth of the implicative formula $\phi\to\psi$.
\end{definition}

\subsection{Semantics}\label{Subsec:Seman}

We interpret formulas on the dual generalized modal spaces, with two kinds of valuations, namely \emph{admissible valuations} which interpret propositional variables as elements in $\mathcal{D}_{\mathcal{K}}(X)$ (i.e.\ interpret them as elements of the dual generalized modal algebras), and \emph{arbitrary valuations} which interpret propositional variables as arbitrary subsets of the space. Notice that the admissible valuations are not necessarily clopen in the topology (they are only closed and satisfy additional conditions), in contrast to existing settings in algorithmic correspondence theory \cite{CoPa12,CoPa19} where Stone/Priestley-like dualities are used. 

\begin{definition}

In a generalized modal space $(X,\mathcal{K},R)$, (we abuse notation to use $X$ to denote the space), we call $X$ the \emph{domain} of the space.

\begin{itemize}
\item An \emph{admissible model} is a pair $M=(X,V)$ where $V:\mathsf{Prop}\to\mathcal{D}_{\mathcal{K}}(X)$ is an \emph{admissible valuation} on $X$. 
\item An \emph{arbitrary model} is a pair $M=(X,V)$ where $V:\mathsf{Prop}\to P(X)$ is an \emph{arbitrary valuation} on $X$ such that for all propositional variables $p$, $V(p)$ is an arbitrary subset of $X$. 
\end{itemize}

Given a valuation $V$, a propositional variable $p\in\mathsf{Prop}$, a subset $A\subseteq X$, we can define $V^{p}_{A}$, the \emph{$p$-variant of $V$} as follows: $V^{p}_{A}(q)=V(q)$ for all $q\neq p$ and $V^{p}_{A}(p)=A$.

Now the satisfaction relation can be defined as follows: given any generalized modal space $(X,\mathcal{K},R)$, any valuation $V$ on $X$, any $w\in X$, 

\begin{center}
\begin{tabular}{l c l}
$X,V,w\Vdash p$ & iff & $w\in V(p)$\\
$X,V,w\Vdash \top$ & : & always\\
$X,V,w\Vdash\varphi\to\psi$ & iff & $X,V,w\nVdash \varphi$ or $X,V,w\Vdash\psi$\\
$X,V,w\Vdash\varphi\land\psi$ & iff & $X,V,w\Vdash \varphi$ and $X,V,w\Vdash\psi$\\
$X,V,w\Vdash\varphi\lor\psi$ & iff & $X,V,w\Vdash \varphi$ or $X,V,w\Vdash\psi$\\
$X,V,w\Vdash \Box\varphi$ & iff & $\forall v(Rwv\ \Rightarrow\ X,V,v\Vdash\varphi)$
\label{page:downarrow}\\
\end{tabular}
\end{center}
For any formula $\phi$, we let $V(\phi)=\{w\in X\mid X,V,w\Vdash\varphi\}$ denote the \emph{truth set} of $\varphi$ in $(X,V)$. 
\begin{itemize}
\item The formula $\varphi$ is \emph{globally true} on $(X,V)$ (notation: $X,V\Vdash\varphi$) if $X,V,w\Vdash\varphi$ for every $w\in W$. 
\item We say that $\varphi$ is \emph{admissibly valid} on a generalized modal space $X$ (notation: $X\Vdash_{\mathcal{D}_{\mathcal{K}}}\varphi$) if $\varphi$ is globally true on $(X,V)$ for every admissible valuation $V$.
\item We say that $\varphi$ is \emph{valid} on a generalized modal space $X$ (notation: $X\Vdash_{P}\varphi$) if $\varphi$ is globally true on $(X,V)$ for every arbitrary valuation $V$.
\end{itemize}
\end{definition}
For the semantics of inequalities and quasi-inequalities,
\begin{itemize}
\item $X,V\Vdash\phi\leq\psi\mbox{ iff }V(\phi)\subseteq V(\psi);$

\item $X,V\Vdash\phi_1\leq\psi_1\ \&\ \ldots \ \&\ \phi_n\leq\psi_n\ \Rightarrow\ \phi\leq\psi\mbox{ iff }$

$X,V\Vdash\phi\leq\psi\mbox{ holds whenever }X,V\Vdash\phi_i\leq_i\psi_i\mbox{ for all }i=1,\ldots, n.$
\end{itemize}
The definitions of validity are similar to formulas.

\section{Preliminaries on Algorithmic Correspondence}\label{Sec:Prelim:Alg:Cor}
In this section, we give preliminaries on the correspondence algorithm $\mathsf{ALBA}$ in the style of \cite{CoPa12,Zh21c}. The algorithm $\mathsf{ALBA}$ transforms the input inequality $\phi\leq\psi$ into an equivalent pure quasi-inequality which contains no propositional variable, and therefore can be translated into the first-order correspondence language via the standard translation of the expanded language (see page \pageref{Sub:FOL:ST}).

In the remainder of the paper, we will define an expanded language which the algorithm will manipulate (Section \ref{Sub:expanded:language}), define the first-order correspondence language of the expanded language and the standard translation (Section \ref{Sub:FOL:ST}). We give the definition of inductive inequalities (Section \ref{sec:Sahlqvist}), define a version of the algorithm $\mathsf{ALBA}$ (Section \ref{Sec:ALBA}), and show its success on inductive inequalities (Section \ref{Sec:Success}) and soundness with respect to admissible valuations (Section \ref{Sec:Soundness}).

\subsection{The Expanded Language}\label{Sub:expanded:language}

In the present subsection, we give the definition of the expanded language, which will be used in $\mathsf{ALBA}$:
$$\varphi::=p \mid \nomi \mid\cnomm\mid \bot \mid \top \mid \varphi\land\varphi \mid \varphi\lor\varphi \mid \varphi\to\varphi \mid \Box\varphi \mid \Diamondblack\phi$$

where $\nomi\in\mathsf{Nom}$ is called a \emph{nominal}, and $\cnomm\in\mathsf{CoNom}$ is called a \emph{conominal}. For $\nomi$, it is interpreted as a singleton set, and $\cnomm$ is interpreted as the complement of a singleton set. For $\Diamondblack$, it is interpreted as the diamond modality on the inverse relation $R^{-1}$.

For the semantics of the expanded language, the valuation $V$ is extended to $\mathsf{Prop}\cup\mathsf{Nom}\cup\mathsf{CoNom}$ such that $V(\nomi)$ is a singleton for each $\nomi\in\mathsf{Nom}$ and $V(\cnomm)$ is the complement of a singleton for each $\cnomm\in\mathsf{CoNom}$. \footnote{Notice that we allow admissible valuations to interpret nominals as singletons, even if singletons might not be in $\mathcal{D}_{\mathcal{K}}(X)$. The admissibility restrictions are only for the propositional variables.} The additional semantic clauses can be given as follows:
\begin{center}
\begin{tabular}{l c l}
$X,V,w\Vdash \bot$ & : & never\\
$X,V,w\Vdash\nomi$ & iff & $V(\nomi)=\{w\}$\\
$X,V,w\nVdash\cnomm$ & iff & $V(\cnomm)=\{w\}^{c}$\\
$X,V,w\Vdash\Diamondblack\varphi$ & iff & $\exists v(Rvw\ \mbox{ and }\ X,V,v\Vdash\varphi)$\\
\end{tabular}
\end{center}

\subsection{The First-order Correspondence Language and the Standard Translation}\label{Sub:FOL:ST}

In the first-order correspondence language, we have a binary predicate symbol $R$ corresponding to the binary relation in the generalized modal space, a set of unary predicate symbols $P$ corresponding to each propositional variable $p$.

\begin{definition}
The standard translation of the expanded language is defined as follows:
\begin{center}
\begin{tabular}{l l}
$ST_{x}(p):=Px$ & $ST_{x}(\phi\land\psi):=ST_{x}(\phi)\land ST_{x}(\psi)$\\
$ST_{x}(\bot):=\bot$ & $ST_{x}(\phi\lor\psi):=ST_{x}(\phi)\lor ST_{x}(\psi)$\\
$ST_{x}(\top):=\top$ & $ST_{x}(\phi\to\psi):=ST_{x}(\phi)\to ST_{x}(\psi)$\\
$ST_{x}(\nomi):=x=i$ & $ST_{x}(\Box\phi):=\forall y(Rxy\to ST_{y}(\phi))$\\
$ST_{x}(\cnomm):=x\neq m$ & $ST_{x}(\Diamondblack\phi):=\exists y(Ryx\land ST_{y}(\phi))$\\
\end{tabular}
\end{center}
\begin{center}
$ST(\phi\leq\psi):=\forall x(ST_{x}(\phi)\to ST_{x}(\psi))$

$ST(\phi_1\leq\psi_1\ \&\ \ldots \ \&\ \phi_n\leq\psi_n\ \Rightarrow\ \phi\leq\psi):=ST(\phi_1\leq\psi_1)\land\ldots\land ST(\phi_n\leq\psi_n)\to ST(\phi\leq\psi)$
\end{center}
\end{definition}

It is easy to see that this translation is correct:

\begin{proposition}
For any generalized modal space $X$, any valuation $V$ on $X$, any $w\in X$ and any expanded language formula $\phi$, 
$$X,V,w\Vdash\phi\mbox{ iff }X,V\vDash ST_{x}(\phi)[w].$$
\end{proposition}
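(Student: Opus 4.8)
The plan is to prove the statement by induction on the structure of the expanded-language formula $\phi$. The claim is that satisfaction in the modal space agrees with first-order satisfaction of the standard translation, i.e.\ $X,V,w\Vdash\phi$ iff $X,V\vDash ST_{x}(\phi)[w]$, where the first-order structure interprets $R$ as the relation of the space and each unary predicate $P$ as $V(p)$ (and the nominals/conominals via their designated points). First I would fix the base cases. For $\phi=p$, both sides say $w\in V(p)$ by the semantic clause for propositional variables and the definition $ST_{x}(p):=Px$. For $\phi=\top$ and $\phi=\bot$, both sides are respectively always/never true, matching $ST_{x}(\top):=\top$ and $ST_{x}(\bot):=\bot$. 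For $\phi=\nomi$, the clause $X,V,w\Vdash\nomi$ iff $V(\nomi)=\{w\}$ matches $ST_{x}(\nomi):=x=i$ once we read $i$ as denoting the unique point of the singleton $V(\nomi)$; dually for $\phi=\cnomm$, the clause $X,V,w\nVdash\cnomm$ iff $V(\cnomm)=\{w\}^{c}$ matches $ST_{x}(\cnomm):=x\neq m$, where $m$ denotes the point whose complement is $V(\cnomm)$.

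Next I would carry out the inductive step for the Boolean-type connectives $\land$, $\lor$, $\to$. Each is entirely routine: the semantic clauses for $\land,\lor,\to$ at a point $w$ are defined by the corresponding meta-level conjunction, disjunction, and implication of the satisfaction of the immediate subformulas, and the standard translation mirrors this exactly with $\land,\lor,\to$ at the first-order level. So each follows immediately from the induction hypotheses applied to the two subformulas.

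The two cases that require a little care are the modal operators $\Box$ and $\Diamondblack$. For $\Box\phi$, the semantic clause reads $X,V,w\Vdash\Box\phi$ iff $\forall v(Rwv\Rightarrow X,V,v\Vdash\phi)$, while $ST_{x}(\Box\phi):=\forall y(Rxy\to ST_{y}(\phi))$. Applying the induction hypothesis to $\phi$ at each successor $v$ (that is, $X,V,v\Vdash\phi$ iff $X,V\vDash ST_{y}(\phi)[v]$) and noting that the first-order $R$ is interpreted as the relation $R$ of the space, the universally quantified first-order statement translates back exactly to the modal clause. For $\Diamondblack\phi$, the clause $X,V,w\Vdash\Diamondblack\phi$ iff $\exists v(Rvw\text{ and }X,V,v\Vdash\phi)$ matches $ST_{x}(\Diamondblack\phi):=\exists y(Ryx\land ST_{y}(\phi))$ in the same way, with the inverted order $Rvw$ reflecting the intended interpretation of $\Diamondblack$ as the diamond on $R^{-1}$.

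I do not expect any genuine obstacle here: this is a standard truth-lemma-style induction and each connective's semantic clause was deliberately defined to match its standard translation. The only point meriting an explicit remark is the handling of nominals and conominals, since the constants $i,m$ appearing in the first-order translation must be understood as denoting the distinguished points singled out by the valuation $V$ (the generator of the singleton $V(\nomi)$, and the point removed in $V(\cnomm)$). Once this convention is stated, the base cases for $\nomi$ and $\cnomm$ go through cleanly, and the induction closes.
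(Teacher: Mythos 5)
Your proof is correct and matches what the paper intends: the paper states this proposition without proof (prefacing it only with ``It is easy to see that this translation is correct''), and the routine structural induction you carry out -- with the explicit convention that the constants $i$ and $m$ denote the distinguished points determined by $V(\nomi)$ and $V(\cnomm)$ -- is exactly the argument being taken for granted. No gaps; nothing further is needed.
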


\begin{proposition}\label{Prop:ST:ineq:quasi:mega}
For any generalized modal space $X$, any valuation $V$ on $X$, and inequality $\mathsf{Ineq}$, quasi-inequality $\mathsf{Quasi}$,
\begin{center}
$X,V\Vdash\mathsf{Ineq}\mbox{ iff }X,V\vDash ST(\mathsf{Ineq});$

$X,V\Vdash\mathsf{Quasi}\mbox{ iff }X,V\vDash ST(\mathsf{Quasi}).$
\end{center}
\end{proposition}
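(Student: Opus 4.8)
The plan is to reduce both claims to the formula-level correctness of the standard translation established in the preceding proposition, which states that $X,V,w\Vdash\phi$ iff $X,V\vDash ST_{x}(\phi)[w]$ for every expanded-language formula $\phi$ and every $w\in X$. Both equivalences then follow by unfolding the semantic clauses for inequalities and quasi-inequalities and matching them against the defining clauses of $ST$ on inequalities and quasi-inequalities.

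For the inequality case, I would start from the semantic clause $X,V\Vdash\phi\leq\psi$ iff $V(\phi)\subseteq V(\psi)$. Unfolding the inclusion pointwise, this holds iff for every $w\in X$ we have $X,V,w\Vdash\phi$ implies $X,V,w\Vdash\psi$. Applying the formula-level proposition to both $\phi$ and $\psi$, this is equivalent to: for every $w\in X$, $X,V\vDash ST_{x}(\phi)[w]$ implies $X,V\vDash ST_{x}(\psi)[w]$, which is exactly the satisfaction of $\forall x(ST_{x}(\phi)\to ST_{x}(\psi))$, i.e.\ of $ST(\phi\leq\psi)$. This is the entire content of the first equivalence.

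For the quasi-inequality case, I would invoke the inequality case componentwise. By definition, $X,V\Vdash\phi_1\leq\psi_1\ \&\ \ldots\ \&\ \phi_n\leq\psi_n\ \Rightarrow\ \phi\leq\psi$ holds iff $X,V\Vdash\phi\leq\psi$ whenever $X,V\Vdash\phi_i\leq\psi_i$ for all $i=1,\ldots,n$. Replacing each $X,V\Vdash\phi_i\leq\psi_i$ by the equivalent $X,V\vDash ST(\phi_i\leq\psi_i)$, and likewise the conclusion by $X,V\vDash ST(\phi\leq\psi)$, using the inequality case just proved, the metalinguistic conditional becomes the first-order implication $ST(\phi_1\leq\psi_1)\land\ldots\land ST(\phi_n\leq\psi_n)\to ST(\phi\leq\psi)$, which is precisely $ST(\mathsf{Quasi})$.

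I do not expect any genuine obstacle here: the proposition is a bookkeeping consequence of the formula-level translation, and the only care needed is to keep the bound variable $x$ of the universal closure distinct from the evaluation point $w$ when passing between the pointwise statement and its quantified form. Notably, no topological properties of the generalized modal space, and no admissibility restriction on $V$, are invoked; the argument is uniform over admissible and arbitrary valuations alike, which is exactly what is needed for the later soundness analysis.
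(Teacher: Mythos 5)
Your proof is correct and is exactly the routine unfolding the paper intends: the paper states this proposition without proof, treating it as an immediate consequence of the formula-level correctness of the standard translation, and your componentwise reduction (inequalities via pointwise inclusion, quasi-inequalities via the inequality case) is the standard argument. No gaps.
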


\section{Inductive Inequalities for Generalized Modal Algebras}\label{sec:Sahlqvist}

In this section, we define inductive inequalities for generalized modal algebras and generalized modal spaces.

We first define positive formulas with propositional variables in $A\subseteq\mathsf{Prop}$ as follows:
$$\mathsf{POS}_{A}::=p\mid\top\mid\Box\mathsf{POS}\mid\mathsf{POS}\land\mathsf{POS}\mid\mathsf{POS}\lor\mathsf{POS}$$
where $p\in A\subseteq\mathsf{Prop}$.

Then we define the dependence order on propositional variables as any irreflexive and transitive binary relation $<_\Omega$ on them. Then we define the PIA formulas with main variable $p$ as follows:
$$\mathsf{PIA}_{p}::=p\mid\top\mid\Box\mathsf{PIA}_{p}\mid\mathsf{POS}_{A_{p}}\to\mathsf{PIA}_{p}$$

where $A_{p}=\{q\in \mathsf{Prop}\mid q<_{\Omega}p\}$ in $\mathsf{PIA}_{p}$. Then we define the inductive antecedent as follows:
$$\mathsf{Ant}::=\mathsf{PIA}_{p}\mid\mathsf{Ant}\land\mathsf{Ant}\mid\mathsf{Ant}\lor\mathsf{Ant}$$

where $p\in\mathsf{Prop}$. Then we define the inductive succedent as follows:
$$\mathsf{Suc}::=p\mid\top\mid\mathsf{PIA}_{q}\to\mathsf{Suc}\mid\Box\mathsf{Suc}\mid\mathsf{Suc}\land\mathsf{Suc}\mid\mathsf{Suc}\lor\mathsf{Suc}$$

where $p,q\in\mathsf{Prop}$.

Finally, an \emph{$\Omega$-inductive inequality} is an inequality of the form $\mathsf{Ant}\leq\mathsf{Suc}$ where each propositional variable in the inequality both occur positively and negatively\footnote{We say that an occurrence of a propositional variable $p$ in a formula $\phi$ is \emph{positive}, if it is in the scope of an even number of the left-handside of implications, and \emph{negative} if it is in the scope of an odd number of the left-handside of implications. For example, in $(p\to q)\to r$, $p$ is positive because it is in the scope of 2 of the left-handside of implications, and $r$ is also positive (in the scope of 0), and $q$ is negative (in the scope of 1).}. An inductive inequality is an $\Omega$-inductive inequality for some $\Omega$.

\section{Algorithm}\label{Sec:ALBA}
In the present section, we define the algorithm $\mathsf{ALBA}$ which compute the first-order correspondence of the input inequality in the style of \cite{CoPa12}. The algorithm $\mathsf{ALBA}$ proceeds in three stages. Firstly, $\mathsf{ALBA}$ receives an inequality $\mathsf{Ant}\leq\mathsf{Suc}$ as input.

\begin{enumerate}

\item \textbf{Preprocessing and First approximation}:

\begin{enumerate}
\item We apply the following distribution rules exhaustively:

\begin{itemize}
\item In $\mathsf{Ant}$, rewrite every subformula of the former form into the latter form:

\begin{itemize}
\item $\alpha\land(\beta\lor\gamma)$, $(\alpha\land\beta)\lor(\alpha\land\gamma)$
\item $(\beta\lor\gamma)\land\alpha$, $(\beta\land\alpha)\lor(\gamma\land\alpha)$
\end{itemize}
\item In $\mathsf{Suc}$, rewrite every subformula of the former form into the latter form:
\begin{itemize}
\item $\alpha\to\beta\land\gamma$, $(\alpha\to\beta)\land(\alpha\to\gamma)$
\item $\Box(\alpha\land\beta)$, $\Box\alpha\land\Box\beta$
\item $\alpha\lor(\beta\land\gamma)$, $(\alpha\lor\beta)\land(\alpha\lor\gamma)$
\item $(\beta\land\gamma)\lor\alpha$, $(\beta\lor\alpha)\land(\gamma\lor\alpha)$
\end{itemize}
\end{itemize}

\item Apply the splitting rules:

$$\infer{\alpha\leq\beta\ \ \ \alpha\leq\gamma}{\alpha\leq\beta\land\gamma}
\qquad
\infer{\alpha\leq\gamma\ \ \ \beta\leq\gamma}{\alpha\lor\beta\leq\gamma}
$$

\end{enumerate}

Now for each obtained inequality $\phi_i\leq\psi_i$, We apply the following first-approximation rule:
$$\infer{\nomi_0\leq\phi_i\ \&\ \psi_i\leq\cnomm_0 \Rightarrow\ \nomi_0\leq\cnomm_0}{\phi_i\leq\psi_i}$$
Now we focus on each set of the antecedent inequalities $\{\nomi_0\leq\phi_i, \psi_i\leq\cnomm_0\}$, which we call a \emph{system}.

\item \textbf{The reduction-elimination cycle}:

In this stage, for each $\{\nomi_0\leq\phi_i,\psi_i\leq\cnomm_0\}$, we apply the following rules to eliminate all the propositional variables:
\begin{enumerate}
\item Splitting rules:

$$
\infer{\alpha\leq\beta\ \ \ \alpha\leq\gamma}{\alpha\leq\beta\land\gamma}
\qquad
\infer{\alpha\leq\gamma\ \ \ \beta\leq\gamma}{\alpha\lor\beta\leq\gamma}
$$

\item Residuation rules:\label{Page:Residuation:Rules}
\begin{prooftree}
\AxiomC{$\alpha\leq\Box\beta$}
\UnaryInfC{$\Diamondblack\alpha\leq\beta$}
\AxiomC{$\alpha\leq\beta\to\gamma$}
\UnaryInfC{$\alpha\land\beta\leq\gamma$}
\noLine\BinaryInfC{}
\end{prooftree}

\item Approximation rules:
$$\infer{\alpha\leq\cnomn\ \ \ \Box\cnomn\leq\cnomm}{\Box\alpha\leq\cnomm}
\qquad
\infer{\nomj\leq\alpha\ \ \ \ \ \ \ \beta\leq\cnomn\ \ \ \ \ \ \ \nomj\rightarrow\cnomn\leq\cnomm}{\alpha\rightarrow\beta\leq\cnomm}
$$
The nominals and conominals introduced by the approximation rules must not occur in the system before applying the rule.

\item Deleting rule: delete inequalities of the form $\alpha\leq\top$.
\item The right-handed Ackermann rule.\footnote{Here we only have the right-handed Ackermann rule, because we could only guarantee the right-handed topological Ackermann lemma, due to the fact that admissible valuations are not clopen anymore, but only closed.} This rule eliminates propositional variables and is the core of the algorithm, the other rules are aimed at reaching a shape in which the rule can be applied. Notice that an important feature of this rule is that it is executed on the whole set of inequalities, and not on a single inequality.\\

The system 
$\left\{ \begin{array}{ll}
\theta_1\leq p \\
\vdots\\
\theta_n\leq p \\
\eta_1\leq\iota_1\\
\vdots\\
\eta_m\leq\iota_m\\
\end{array} \right.$ 
is replaced by 
$\left\{ \begin{array}{ll}
\eta_1((\theta_1\lor\ldots\lor\theta_n)/p)\leq\iota_1((\theta_1\lor\ldots\lor\theta_n)/p) \\
\vdots\\
\eta_m((\theta_1\lor\ldots\lor\theta_n)/p)\leq\iota_m((\theta_1\lor\ldots\lor\theta_n)/p) \\
\end{array} \right.$
where:
\begin{enumerate}
\item $p$ does not occur in $\theta_1, \ldots, \theta_n$;
\item Each $\eta_i$ is positive, and each $\iota_i$ negative in $p$, for $1\leq i\leq m$.
\end{enumerate}
\end{enumerate}

\item \textbf{Output}: If in the previous stage, for some systems, the algorithm gets stuck, i.e.\ some propositional variables cannot be eliminated, then the algorithm halts and output ``failure''. Otherwise, each initial system after the first approximation has been reduced to a set of pure inequalities Reduce$(\nomi_0\leq\phi_i, \psi_i\leq\cnomm_0)$, and then the output is a set of quasi-inequalities $\{\&$Reduce$(\nomi_0\leq\phi_i, \psi_i\leq\cnomm_0)\Rightarrow \nomi_0\leq \cnomm_0\}_{i\in I}$. Then we can use the conjunction of the standard translations of the quasi-inequalities to obtain the first-order correspondence (notice that in the standard translation of each quasi-inequality, we need to universally quantify over all the individual variables).
\end{enumerate}

\section{Success of $\mathsf{ALBA}$}\label{Sec:Success}

In the present section, we show the success of $\mathsf{ALBA}$ on any inductive inequality $\phi\leq\psi$. 

\begin{theorem}
$\mathsf{ALBA}$ succeeds on any inductive inequality $\phi\leq\psi$ and outputs a pure quasi-inequality and a first-order formula.
\end{theorem}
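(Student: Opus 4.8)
The plan is to follow the standard template for ALBA-success proofs (as in \cite{CoPa12}): analyze the effect of each processing stage on the syntactic shape of an inductive inequality, and show that by the time we enter the reduction--elimination cycle, every propositional variable can be brought into a form to which the right-handed Ackermann rule applies. The key invariant to track is the dependence order $<_\Omega$ witnessing inductivity: we process variables in an order compatible with $<_\Omega$, eliminating $\Omega$-minimal ones first, so that when we target a variable $p$, all variables strictly below $p$ have already been removed and the PIA-subformulas of main variable $p$ have had their $\mathsf{POS}_{A_p}$ antecedents turned pure.

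First I would trace the preprocessing stage. The distribution rules in $\mathsf{Ant}$ push $\lor$ outward and in $\mathsf{Suc}$ push $\land$ outward, so after the splitting rules each resulting inequality has an antecedent built from $\mathsf{PIA}$-formulas joined only by $\land$ (a ``$\land$-of-PIA'' shape) and a succedent whose outermost structure has been flattened; crucially the $\mathsf{PIA}_p$ and $\mathsf{POS}$ building blocks are preserved by these rewrites. After first approximation each system contains inequalities $\nomi_0\leq\phi_i$ and $\psi_i\leq\cnomm_0$. The heart of the argument is then to show, for each propositional variable $p$ taken in a $<_\Omega$-descending-friendly order, that the residuation and approximation rules can \emph{display} $p$: reversing the PIA-construction, $\Box\mathsf{PIA}_p$ is unpacked by the $\Diamondblack\dashv\Box$ residuation rule and $\mathsf{POS}_{A_p}\to\mathsf{PIA}_p$ is unpacked by the $(\cdot\land\cdot)\dashv(\cdot\to\cdot)$ residuation rule together with the $\to$-approximation rule, each step driving the occurrence of $p$ toward the surface until we obtain inequalities $\theta_j\leq p$ with $p$ not occurring in any $\theta_j$. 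Because the antecedents of those implications were $\mathsf{POS}_{A_p}$ with $A_p=\{q\mid q<_\Omega p\}$, and those $q$ are already eliminated (hence pure), the $\theta_j$ are pure in the relevant sense and the Ackermann side-conditions (i)--(ii) are met; the positive/negative polarity requirement on the remaining inequalities follows from the polarity bookkeeping of the inductive definition.

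I would then verify that a single application of the right-handed Ackermann rule removes $p$ entirely, substituting $(\theta_1\lor\dots\lor\theta_n)/p$ throughout, and that the substituted system is again of the inductive shape with respect to the restricted order $<_\Omega$ on the surviving variables — this is the induction step that lets the cycle repeat until no propositional variables remain, after which the Deleting rule and collection of pure inequalities produce the output quasi-inequalities and, via the standard translation, the first-order formula.

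\textbf{The main obstacle} I expect is the displaying step for a general $\mathsf{PIA}_p$ formula nested inside $\mathsf{Ant}$: one must check that the residuation and approximation rules genuinely suffice to isolate $p$ on one side with a pure (or already-eliminated-variable) opposite side, and in particular that the $\mathsf{POS}_{A_p}$ antecedents are \emph{positive} in all their variables so that after those variables are purified the $\to$-approximation rule leaves $p$ displayed with the correct polarity. Care is also needed where the succedent contributes to a system via $\psi_i\leq\cnomm_0$, since $\mathsf{Suc}$ permits $\mathsf{PIA}_q\to\mathsf{Suc}$ and $\Box\mathsf{Suc}$, whose approximation generates fresh (co)nominals; I would confirm these fresh (co)nominals never reintroduce a propositional variable and that the termination measure (number of remaining propositional variables, with connective-count as secondary measure) strictly decreases, guaranteeing the cycle halts rather than looping.
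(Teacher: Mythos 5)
Your proposal is correct and follows essentially the same route as the paper: a shape analysis of $\mathsf{Ant}$ and $\mathsf{Suc}$ through preprocessing and first approximation, displaying each variable via the splitting, residuation and approximation rules to reach inequalities $\mathsf{MinVal}_p\leq p$, $p\leq\cnomm$, and pure inequalities, and then eliminating variables by the right-handed Ackermann rule in $\Omega$-order so that the minimal valuations are pure at the moment of each application. Your additional attention to termination and to the purity invariant under substitution is consistent with (and slightly more explicit than) the paper's argument.
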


\begin{proof}
We check the shape of the inequality or system in each stage, for the input inequality $\mathsf{Ant}\leq\mathsf{Suc}$:

\textbf{Stage 1.} 

After applying the distribution rules, it is easy to see that $\mathsf{Ant}$ becomes the form $\bigvee\bigwedge\mathsf{PIA}_{p}$, and $\mathsf{Suc}$ becomes the form $\bigwedge\mathsf{Suc}'$, where $$\mathsf{Suc}'::=p\mid\top\mid\mathsf{PIA}_{q}\to\mathsf{Suc}'\mid\Box\mathsf{Suc}'\mid\mathsf{Suc}'\lor\mathsf{Suc}'.$$

Then by applying the splitting rules, we get a set of inequalities of the form $\bigwedge\mathsf{PIA}_{p}\leq\mathsf{Suc}'$.

After the first approximation rule, each system is of the form $\{\nomi_0\leq\bigwedge\mathsf{PIA}_{p}, \mathsf{Suc}'\leq\cnomm_0\}$.

\textbf{Stage 2.}
In this stage, we deal with each system $\{\nomi_0\leq\bigwedge\mathsf{PIA}_{p}, \mathsf{Suc}'\leq\cnomm_0\}$.

For the inequality $\nomi_0\leq\bigwedge\mathsf{PIA}_{p}$, by first applying the splitting rule for $\land$ and then exhaustively applying the residuation rules for $\Box$ and $\to$, we get inequalities of the form $\mathsf{MinVal}_{p}\leq p$ or $\mathsf{MinVal}_{p}\leq \top$, where 
$$\mathsf{MinVal}_{p}::=\nomi_0\mid\Diamondblack\mathsf{MinVal}_{p}\mid \mathsf{MinVal}_{p}\land\mathsf{POS}_{A_{p}},$$
where $A_{p}=\{q\in \mathsf{Prop}\mid q<_{\Omega}p\}$.

For the inequality $\mathsf{Suc}'\leq\cnomm_0$, by exhaustively applying the approximation rules for $\Box$ and $\to$ and the splitting rule for $\lor$, we get a set of inequalities, each of which is in the following form:

\begin{itemize}
\item $\nomj\leq\mathsf{PIA}'_{p}$;
\item $p\leq\cnomm$;
\item $\nomi\to\cnomn\leq\cnomm$;
\item $\Box\cnomn\leq\cnomm$;
\end{itemize}

Now by applying the same strategy of the $\nomi_0\leq\bigwedge\mathsf{PIA}_{p}$ case to $\nomj\leq\mathsf{PIA}'_{p}$, we get a set of inequalities of the following form (it is easy to see that by the positivity and negativity requirement of propositional variables in inductive inequalities, the first kind of inequality always exists for each propositional variable in the inequality):

\begin{itemize}
\item $\mathsf{MinVal}_{p}\leq p$;
\item $\mathsf{MinVal}_{p}\leq \top$ (they are deleted by the deleting rule);
\item $p\leq\cnomm$;
\item $\nomi\to\cnomn\leq\cnomm$;
\item $\Box\cnomn\leq\cnomm$;
\end{itemize}

Now we are ready to apply the right-handed Ackermann rule to an $\Omega$-miminal variable $q$ to eliminate it. Then since there are only finitely many propositional variables, we can always find another $\Omega$-miminal variable to eliminate. Finally we eliminate all propositional variables and get a pure quasi-inequality and its standard translation. Notice that during last phase of the algorithm applying the right-handed Ackermann rule, the right-hand side of an inequality is either $p$ or $\cnomm$ where $p$ is only appearing in the inequalities with minimal valuations.
\end{proof}

\section{Soundness of $\mathsf{ALBA}$}\label{Sec:Soundness}

In this section we show the soundness of the algorithm with respect to the admissible valuations. The soundness proof follows the style of \cite{CoPa12}. For most of the rules, the soundness proofs are the same to existing literature and hence are omitted, so we only give details for the proofs which are different, namely the right-handed Ackermann rule.

\begin{theorem}[Soundness]\label{Thm:Soundness}
If $\mathsf{ALBA}$ runs according to the success proof on an input inductive inequality $\phi\leq\psi$ and outputs a first-order formula $\mathsf{FO(\phi\leq\psi)}$, then for any generalized modal space $(X,\mathcal{K},R)$, $$X\Vdash_{\mathcal{D}_{\mathcal{K}}}\mathsf{\phi\leq\psi}\mbox{ iff }X\vDash\mathsf{FO(\phi\leq\psi)}.$$
\end{theorem}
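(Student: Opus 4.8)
The plan is to establish the soundness theorem via the standard ALBA soundness architecture: track the equivalence of the input inequality with the intermediate systems at each rule application, and reduce the whole claim to the soundness of individual rules with respect to admissible valuations. The key observation is that the statement $X\Vdash_{\mathcal{D}_{\mathcal{K}}}\phi\leq\psi$ means $X,V\Vdash\phi\leq\psi$ for all admissible valuations $V$, while the output $\mathsf{FO}(\phi\leq\psi)$ is a pure first-order formula whose truth does not depend on the valuation of propositional variables at all. So I would first unwind both sides into a chain of equivalences, one link per rule in the run that the success proof prescribes.

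First I would set up the chain. By Proposition \ref{Prop:ST:ineq:quasi:mega}, validity of an inequality or quasi-inequality under a valuation is equivalent to satisfaction of its standard translation, so it suffices to argue at the level of the semantic validity relation $\Vdash$. The preprocessing rules (distribution and splitting) and the first-approximation rule are classical and, as the paper notes, their soundness is identical to the existing literature; I would cite \cite{CoPa12} and assert that passing through Stage 1 preserves admissible validity, turning $\phi\leq\psi$ into the conjunction over $i$ of the systems $\{\nomi_0\leq\phi_i,\ \psi_i\leq\cnomm_0\}$ read as quasi-inequalities. Likewise the splitting, residuation, approximation, and deleting rules of Stage 2 are sound by the usual adjunction/residuation arguments: $\Diamondblack$ is the left adjoint of $\Box$ (diamond on $R^{-1}$), $\land\beta$ is residuated by $\beta\to(-)$, and the approximation rules merely name extremal witnesses by fresh nominals/conominals. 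Each such rule preserves equivalence under \emph{arbitrary} valuations, hence a fortiori under admissible ones, because nominals and conominals are permitted to range over (co)singletons regardless of admissibility.

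The genuinely new link in the chain, and the one I expect to be the main obstacle, is the right-handed Ackermann rule. Its soundness is exactly the content of what the paper calls the right-handed topological Ackermann lemma (Section \ref{SubSubSec:Top:Ack:Lemma}). The plan is to invoke that lemma directly. Concretely, for an $\Omega$-minimal variable $p$ the system has the shape displayed in the rule, with $p$ not occurring in the $\theta_j$, each $\eta_i$ positive and each $\iota_i$ negative in $p$; I would argue that the minimal admissible value $U_p=V((\theta_1\lor\ldots\lor\theta_n))$ lies in $\mathcal{D}_{\mathcal{K}}(X)$, and that the existentially quantified statement ``there is an admissible $A$ with $V^p_A(\theta_j)\subseteq A$ for all $j$ and $V^p_A(\eta_i)\subseteq V^p_A(\iota_i)$ for all $i$'' is equivalent to the substituted system. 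The forward direction uses monotonicity (positivity of $\eta_i$, negativity of $\iota_i$) to push any admissible witness down to $U_p$; the backward direction checks that $U_p$ is itself an admissible witness. The crux, and the reason the one-sidedness of the rule is forced, is that $U_p$ must be shown to be an \emph{admissible} value, i.e.\ a member of $\mathcal{D}_{\mathcal{K}}(X)$ — closed but not open — which is precisely where the closedness hypotheses on generalized Stone spaces (Definition \ref{Def:Tarski:Space}, condition 5) and the closedness of $R[Y]$ (Proposition \ref{Prop:RY:Closed}) and the pseudo-compactness Lemma \ref{Lemma:Pseudo:Compactness} enter. Here the departure from the clopen-valuation setting matters, and the topological Ackermann lemma is doing all the real work.

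Finally I would assemble the pieces. Composing the equivalences along the run of Stage 2 for each system, every propositional variable is eliminated, leaving a pure quasi-inequality whose admissible validity coincides with its arbitrary validity (no propositional variables remain, so the admissibility restriction is vacuous). Taking standard translations via Proposition \ref{Prop:ST:ineq:quasi:mega} and conjoining over all systems yields $\mathsf{FO}(\phi\leq\psi)$, and the resulting two-way implication $X\Vdash_{\mathcal{D}_{\mathcal{K}}}\phi\leq\psi$ iff $X\vDash\mathsf{FO}(\phi\leq\psi)$ is exactly the asserted equivalence. The whole argument is thus a bookkeeping composition of per-rule soundness statements, with the sole nontrivial input being the right-handed topological Ackermann lemma.
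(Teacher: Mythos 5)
Your overall architecture is the same as the paper's: a chain of equivalences through Stage 1 (Proposition \ref{prop:Soundness:first:approximation}), the Stage 2 rules (Proposition \ref{Prop:Stage:2}), the right-handed Ackermann rule (Proposition \ref{Prop:Ackermann}), and the standard translation (Proposition \ref{Prop:ST:ineq:quasi:mega}). However, there is a genuine error at the one step you correctly identify as the crux. You propose to show that the minimal value $U_p=V(\theta_1\lor\ldots\lor\theta_n)$ \emph{lies in} $\mathcal{D}_{\mathcal{K}}(X)$ and to use $U_p$ itself as the admissible witness in the backward direction. That is false in this setting, and its failure is precisely the point of the paper. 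The $\theta_j$ are built from nominals and $\top$ using $\Diamondblack,\Box,\land,\lor$; nominals denote singletons, which need not belong to $\mathcal{D}_{\mathcal{K}}(X)$, and $\Diamondblack$ applied to a closed set yields only a closed set (Proposition \ref{Prop:RY:Closed}), not a member of $\mathcal{D}_{\mathcal{K}}(X)$. So $V(\theta)$ is in general only in $C(X)$, and ``check that $U_p$ is an admissible witness'' cannot be carried out.

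The paper's Lemma \ref{aRight:Ack} is stated accordingly: the witness is not $V(\theta)$ but some $Z\in\mathcal{D}_{\mathcal{K}}(X)$ with $V(\theta)\subseteq Z$. Its construction requires machinery your plan does not mention: write the closed set $V(\theta)$ as a downward-directed intersection $\bigcap_i X_i$ of admissible sets; push this intersection through each $\eta_i$ using the preservation of directed intersections by the term functions (Lemma \ref{Lemma:Intersection}, which in turn needs $\Diamondblack$ to commute with directed intersections of closed sets); then invoke the compactness-style Lemma \ref{Lemma:Pseudo:Compactness:2} to extract, for each inequality, a single $X_i$ that already satisfies it; finally intersect the finitely many chosen $X_i$'s to get $Z$. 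Note also that this argument leans on $\theta$ being \emph{pure} (because $p$ is $\Omega$-minimal and the rule is applied in the order prescribed by the success proof), and that monotonicity alone does not close the forward direction of your equivalence as stated, since the substituted system uses $V(\theta)$ while the witness uses the larger $Z$. Your proposal names the right supporting results but deploys them toward the wrong conclusion; as written, the backward direction of your Ackermann step does not go through.
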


\begin{proof}
The proof goes similarly to \cite[Theorem 8.1]{CoPa12}. Let $\phi\leq\psi$ denote the input inequality, let $\{\nomi_{0}\leq\phi_i\ \&\ \psi_i\leq\cnomm_{0}\Rightarrow \nomi_{0}\leq\cnomm_{0}\}_{i\in I}$ denote the set of quasi-inequalities after the first-approximation rule, let $\{\&\mbox{Reduce}(\nomi_{0}\leq\phi, \psi\leq\cnomm_{0})\Rightarrow \nomi_{0}\leq\cnomm_{0}\}_{i\in I}$ denote the set of quasi-inequalities after Stage 2, let $\mathsf{FO(\phi\leq\psi)}$ denote the standard translation of the quasi-inequalities in Stage 3 into first-order formulas, then it suffices to show the equivalence from (\ref{Crct:Eqn0}) to (\ref{Crct:Eqn4}) given below:
\begin{eqnarray}
&&X\Vdash_{\mathcal{D}_{\mathcal{K}}}\phi\leq\psi\label{Crct:Eqn0}\\
&&X\Vdash_{\mathcal{D}_{\mathcal{K}}}\nomi_{0}\leq\phi_i\ \&\ \psi_i\leq\cnomm_{0}\Rightarrow \nomi_{0}\leq\cnomm_{0},\mbox{ for all }i\in I\label{Crct:Eqn2}\\
&&X\Vdash_{\mathcal{D}_{\mathcal{K}}}\&\mbox{Reduce}(\nomi_{0}\leq\phi, \psi\leq\cnomm_{0})\Rightarrow \nomi_{0}\leq\cnomm_{0},\mbox{ for all }i\in I\label{Crct:Eqn3}\\
&&X\vDash\mathsf{FO(\phi\leq\psi)}\label{Crct:Eqn4}
\end{eqnarray}
The equivalence between (\ref{Crct:Eqn0}) and (\ref{Crct:Eqn2}) follows from Proposition \ref{prop:Soundness:first:approximation};

The equivalence between (\ref{Crct:Eqn2}) and (\ref{Crct:Eqn3}) follows from Propositions \ref{Prop:Stage:2}, \ref{Prop:Ackermann};

The equivalence between (\ref{Crct:Eqn3}) and (\ref{Crct:Eqn4}) follows from Proposition \ref{Prop:ST:ineq:quasi:mega}.
\end{proof}

In the remainder of this section, we prove the soundness of the rules in each stage.
\begin{proposition}\label{prop:Soundness:first:approximation}
The distribution rules, the splitting rules are sound in $X$, and the first-approximation rule is sound in $X$, i.e.\ (\ref{Crct:Eqn0}) and (\ref{Crct:Eqn2}) are equivalent.
\end{proposition}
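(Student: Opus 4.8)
The plan is to prove Proposition \ref{prop:Soundness:first:approximation} by establishing the soundness of each of the three families of rules separately, since together they constitute the passage from the input inequality to the systems obtained after the first-approximation rule. For the distribution rules, I would argue that each rewrite replaces a subformula by a semantically equivalent one: in every generalized modal space, and under any valuation (admissible or arbitrary), the truth sets of $\alpha\land(\beta\lor\gamma)$ and $(\alpha\land\beta)\lor(\alpha\land\gamma)$ coincide, and similarly for the other pairs. The key point is that conjunction, disjunction, implication, and $\Box$ are interpreted by the corresponding lattice/residuation operations on $P(X)$ (with $V(\Box\varphi)=\{x:R(x)\subseteq V(\varphi)\}$), so these equivalences are just the distributive laws of the underlying operations; notably $\Box(\alpha\land\beta)=\Box\alpha\land\Box\beta$ holds at the level of truth sets because $R(x)\subseteq V(\alpha)\cap V(\beta)$ iff $R(x)\subseteq V(\alpha)$ and $R(x)\subseteq V(\beta)$. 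Since replacing a subformula by a truth-set-equal subformula preserves the truth set of the whole formula (a routine induction on formula structure), the rewrite preserves $V(\phi)\subseteq V(\psi)$, hence preserves admissible validity in both directions.

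Next I would handle the splitting rules. The inequality $\alpha\leq\beta\land\gamma$ holds in a model iff $V(\alpha)\subseteq V(\beta)\cap V(\gamma)$, which holds iff $V(\alpha)\subseteq V(\beta)$ and $V(\alpha)\subseteq V(\gamma)$, i.e.\ iff both $\alpha\leq\beta$ and $\alpha\leq\gamma$ hold; dually for $\alpha\lor\beta\leq\gamma$. This equivalence is pointwise and purely set-theoretic, so it is again valid under any fixed valuation and therefore preserves admissible validity of the (quasi-)inequality. Since these first two families introduce no nominals or conominals, their soundness is insensitive to the subtlety that admissible valuations need not be clopen.

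The genuinely new step is the first-approximation rule, which passes from $\phi_i\leq\psi_i$ to the quasi-inequality $\nomi_0\leq\phi_i\ \&\ \psi_i\leq\cnomm_0\Rightarrow\nomi_0\leq\cnomm_0$, and this is where I expect the main (though still modest) obstacle to lie, because nominals and conominals are interpreted by singletons and co-singletons that are \emph{not} in $\mathcal{D}_{\mathcal{K}}(X)$. The argument I would give is the standard one: fixing an admissible valuation $V$ of the propositional variables, the inequality $\phi_i\leq\psi_i$ says $V(\phi_i)\subseteq V(\psi_i)$. For the forward direction, assuming this holds, take any $V'$ agreeing with $V$ on $\P$ and any assignment of $\nomi_0,\cnomm_0$ with $V'(\nomi_0)\subseteq V'(\phi_i)$ and $V'(\psi_i)\subseteq V'(\cnomm_0)$; then $V'(\nomi_0)\subseteq V'(\phi_i)=V(\phi_i)\subseteq V(\psi_i)=V'(\psi_i)\subseteq V'(\cnomm_0)$, giving $\nomi_0\leq\cnomm_0$. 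For the converse, I would use the fact that $V(\phi_i)\subseteq V(\psi_i)$ can be tested pointwise: if it fails, there is $w\in V(\phi_i)\setminus V(\psi_i)$, and instantiating $V'(\nomi_0)=\{w\}$ and $V'(\cnomm_0)=\{w\}^{c}$ satisfies both antecedent inequalities while violating $\nomi_0\leq\cnomm_0$. The crucial observation making this permissible is the footnote convention that admissible valuations may still interpret nominals and conominals as singletons and co-singletons even though these lie outside $\mathcal{D}_{\mathcal{K}}(X)$; the admissibility restriction binds only the propositional variables. With this convention in force the equivalence of (\ref{Crct:Eqn0}) and (\ref{Crct:Eqn2}) follows, completing the proof.
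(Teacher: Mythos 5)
Your proposal is correct: the distribution and splitting rules are sound because they replace formulas by truth-set-equal ones under any valuation, and your treatment of the first-approximation rule — forward direction by transitivity of inclusion, converse by instantiating $\nomi_0$ as $\{w\}$ and $\cnomm_0$ as $\{w\}^{c}$ for a witness $w\in V(\phi_i)\setminus V(\psi_i)$, which is permitted since the admissibility restriction does not bind nominals or conominals — is exactly the standard argument. The paper itself gives no details here and simply defers to \cite[Propositions 6.2, 6.3]{Zh21c}; your write-up supplies the same argument that reference establishes, so there is no substantive divergence.
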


\begin{proof}
See \cite[Proposition 6.2, 6.3]{Zh21c}.
\end{proof}

The next step is to show the soundness of each rule of Stage 2. For each rule, before the application of this rule we have a system $S$, after applying the rule we get a system $S'$, the soundness of Stage 2 is then the equivalence of the following:
\begin{itemize}
\item $X\Vdash_{\mathcal{D}_{\mathcal{K}}}\bigamp S\ \Rightarrow \nomi_0\leq\cnomm_0$
\item $X\Vdash_{\mathcal{D}_{\mathcal{K}}}\bigamp S'\ \Rightarrow \nomi_0\leq\cnomm_0$
\end{itemize}

where $\bigamp S$ denote the meta-conjunction of inequalities of $S$. It suffices to show the following property:

\begin{itemize}\label{condition:1:4:equivalence}
\item For any $X$, any admissible valuation $V$, if $X,V\Vdash S$, then there is an admissible valuation $V'$ such that $V'(\nomi_0)=V(\nomi_0)$, $V'(\cnomm_0)=V(\cnomm_0)$ and $X,V'\Vdash S'$;
\item For any $X$, any admissible valuation $V'$, if $X,V'\Vdash S'$, then there is an admissible valuation $V$ such that $V(\nomi_0)=V'(\nomi_0)$, $V(\cnomm_0)=V'(\cnomm_0)$ and $X,V\Vdash S$.
\end{itemize}

\begin{proposition}\label{Prop:Stage:2}
The splitting rules, the approximation rules, the residuation rules and the deleting rule in Stage 2 are sound in both directions in $X$.
\end{proposition}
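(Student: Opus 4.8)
The plan is to verify, for each of the four rule types, the two-directional sufficient condition displayed just before the proposition: starting from an admissible $V$ witnessing $S$, produce an admissible $V'$ witnessing $S'$ that agrees with $V$ on $\nomi_0,\cnomm_0$, and conversely. I would first split the rules into two groups according to whether they introduce fresh nominals or conominals into $S'$.

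For the splitting rules, the residuation rules, and the deleting rule, no new symbols appear in $S'$, so in both directions I would simply take $V'=V$ (respectively $V=V'$) and check that the rewritten inequality is semantically equivalent to the original under this fixed valuation. For splitting this is the distributivity of $\subseteq$ over $\cap$ and over $\cup$: $V(\alpha)\subseteq V(\beta)\cap V(\gamma)$ iff $V(\alpha)\subseteq V(\beta)$ and $V(\alpha)\subseteq V(\gamma)$, and dually for $\cup$. For the residuation rules I would use that $\mathcal{D}_{\mathcal{K}}(X)$ is a generalized modal algebra: the $\land$--$\to$ residuation $V(\alpha)\cap V(\beta)\subseteq V(\gamma)$ iff $V(\alpha)\subseteq V(\beta)^{c}\cup V(\gamma)=V(\beta\to\gamma)$, together with the adjunction $\Diamondblack\dashv\Box$, which at the level of truth sets reads $R[V(\alpha)]\subseteq V(\beta)$ iff $V(\alpha)\subseteq\{y:R(y)\subseteq V(\beta)\}=V(\Box\beta)$. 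The deleting rule is immediate since $V(\top)=X$, so any inequality $\alpha\leq\top$ is vacuously satisfied and its removal does not change the truth of the system.

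The substantive part is the two approximation rules, where the fresh symbols must be assigned witness valuations. For the box-approximation rule, which replaces $\Box\alpha\leq\cnomm$ by $\alpha\leq\cnomn$ and $\Box\cnomn\leq\cnomm$, I would argue in the forward direction by writing $V(\cnomm)=\{w\}^{c}$: from $V(\Box\alpha)\subseteq\{w\}^{c}$ we get $w\notin V(\Box\alpha)$, hence there is $u$ with $Rwu$ and $u\notin V(\alpha)$; setting $V'(\cnomn)=\{u\}^{c}$ then gives $V(\alpha)\subseteq\{u\}^{c}$ and $w\notin V'(\Box\cnomn)$ (witnessed by $u$), i.e.\ $V'(\Box\cnomn)\subseteq V(\cnomm)$, while all other inequalities of $S$ are untouched because $\cnomn$ is fresh. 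The backward direction is pure monotonicity: $\alpha\leq\cnomn$ and monotonicity of $\Box$ give $\Box\alpha\leq\Box\cnomn\leq\cnomm$. For the implication-approximation rule, replacing $\alpha\to\beta\leq\cnomm$ by $\nomj\leq\alpha$, $\beta\leq\cnomn$, $\nomj\to\cnomn\leq\cnomm$, I would observe that $V(\alpha\to\beta)\subseteq\{w\}^{c}$ forces $w\notin V(\alpha)^{c}\cup V(\beta)$, so $w\in V(\alpha)$ and $w\notin V(\beta)$; taking $V'(\nomj)=\{w\}$ and $V'(\cnomn)=\{w\}^{c}$ verifies all three new inequalities, and the backward direction again follows from antitonicity of $\to$ in its first and monotonicity in its second argument, yielding $\alpha\to\beta\leq\nomj\to\cnomn\leq\cnomm$.

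The only point that genuinely requires the setting of this paper, and which I expect to be the crux of the verification, is that the witness valuations $V'$ remain \emph{admissible}. Here I would invoke the footnote convention: admissible valuations are permitted to interpret nominals by arbitrary singletons and conominals by arbitrary complements of singletons, regardless of whether $\{u\}^{c}$ or $\{w\}$ lies in $\mathcal{D}_{\mathcal{K}}(X)$, since the admissibility restriction binds only the propositional variables, which are never altered by any Stage 2 rule. Consequently each constructed $V'$ is admissible and agrees with $V$ off the fresh symbols (in particular on $\nomi_0,\cnomm_0$), so no topological hypothesis on $(X,\mathcal{K},R)$ is needed for the rules treated here. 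The genuine topological work — where closedness of $R(x)$ and the defining properties of $\mathcal{D}_{\mathcal{K}}(X)$ enter — is thereby confined to the right-handed Ackermann rule (Proposition~\ref{Prop:Ackermann}), not to this proposition.
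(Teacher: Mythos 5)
Your proof is correct and follows exactly the standard ALBA soundness argument that the paper itself invokes by citation to \cite{Zh21c}: identity valuations for the splitting, residuation and deleting rules, and fresh singleton/co-singleton witnesses for the approximation rules, with admissibility of the modified valuations secured by the footnote convention on nominals and conominals. You have merely written out in full the details the paper delegates to the reference, including the correct observation that no topological hypotheses are needed until the Ackermann rule.
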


\begin{proof}
The soundness proofs for the splitting rules, the approximation rules and the residuation rules are the same to the soundness of the same rules in \cite[Lemma 6.5-6.7, 6.12]{Zh21c}. The soundness of the deleting rule is trivial.
\end{proof}

\begin{proposition}\label{Prop:Ackermann}
The right-handed Ackermann rule applied in the success proof is sound in $X$.
\end{proposition}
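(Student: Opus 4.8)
The plan is to establish the two bidirectional admissible-valuation statements displayed on page \pageref{condition:1:4:equivalence}, applied to the right-handed Ackermann rule. Write $S$ for the system before the rule, consisting of the inequalities $\theta_1\leq p,\ldots,\theta_n\leq p$ together with $\eta_1\leq\iota_1,\ldots,\eta_m\leq\iota_m$, and write $S'$ for the system obtained by substituting $\theta:=\theta_1\lor\ldots\lor\theta_n$ for $p$ in each $\eta_j\leq\iota_j$. Recall the side conditions: $p$ does not occur in any $\theta_i$, each $\eta_j$ is positive in $p$, and each $\iota_j$ is negative in $p$. Crucially, by the success proof each $\theta_i$ is of the form $\mathsf{MinVal}_p$, built from $\nomi_0$, $\Diamondblack$, and meets with positive formulas $\mathsf{POS}_{A_p}$ over strictly lower variables; since the $\Omega$-minimal variable $q$ is eliminated first and all lower variables are already pure, the term $\theta$ denotes a \emph{closed} set of the form required by $\mathcal{D}_{\mathcal{K}}(X)$-like arguments, but need not be open.

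First I would treat the easy direction, from $S$ to $S'$. The hypotheses $\theta_i\leq p$ for all $i$ give $\theta=\theta_1\lor\ldots\lor\theta_n\leq p$ under $V$, i.e.\ $V(\theta)\subseteq V(p)$. Using the monotonicity of $\eta_j$ in $p$ and the antitonicity of $\iota_j$ in $p$, from $V(\eta_j)\subseteq V(\iota_j)$ one derives $V(\eta_j(\theta/p))\subseteq V(\eta_j)\subseteq V(\iota_j)\subseteq V(\iota_j(\theta/p))$, so $X,V\Vdash S'$; here we simply keep $V'=V$, so the agreement on $\nomi_0,\cnomm_0$ is immediate. This direction uses no topology beyond the semantic clauses and the positivity/negativity bookkeeping.

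The genuine content is the converse, from $S'$ to $S$: given an admissible $V'$ with $X,V'\Vdash S'$, define $V$ by keeping all other assignments and setting $V(p):=V'(\theta)$, the truth set of $\theta_1\lor\ldots\lor\theta_n$ under $V'$. The inequalities $\theta_i\leq p$ then hold by construction, and the $\eta_j\leq\iota_j$ transfer back by the same monotonicity argument read in reverse. The step I expect to be the main obstacle, and the whole reason only the \emph{right-handed} rule is sound, is checking that $V$ is again an \emph{admissible} valuation, i.e.\ that $V(p)=V'(\theta)\in\mathcal{D}_{\mathcal{K}}(X)$. Since $\mathcal{D}_{\mathcal{K}}(X)$ is a generalized Boolean algebra closed under the operations interpreting $\lor$, $\land$ and $\Box$, and since $\theta$ is built by $\mathsf{MinVal}_p$ from nominals via $\Diamondblack$ and meets with pure $\mathsf{POS}_{A_p}$ formulas, the set $V'(\theta)$ is closed (invoking Proposition \ref{Prop:RY:Closed} to handle the $\Diamondblack$/$R[{-}]$ part) and satisfies the defining membership condition of $\mathcal{D}_{\mathcal{K}}(X)$. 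This is exactly the topological Ackermann lemma of Section \ref{SubSubSec:Top:Ack:Lemma}: the minimal valuation $V'(\theta)$ lands in the admissible algebra precisely because it is a \emph{join} of minimal-valuation terms that are closed, and the asymmetry between closed-but-not-open admissible sets forbids the dual (left-handed) construction. I would therefore invoke the topological Ackermann lemma to certify admissibility of $V(p)$, and Lemma \ref{Lemma:Pseudo:Compactness} where a compactness argument is needed to pass between the infinitary meets/joins and finite ones, completing the equivalence of the two displayed conditions and hence the soundness of the rule.
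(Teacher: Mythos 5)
There is a genuine gap in the hard direction, and it sits exactly at the point this paper identifies as its main novelty. You construct the admissible valuation for the pre-rule system by setting $V(p):=V'(\theta)$ and then assert that $V'(\theta)$ ``satisfies the defining membership condition of $\mathcal{D}_{\mathcal{K}}(X)$.'' This is false in general. The term $\theta=\mathsf{V}_p$ is built from nominals via $\Diamondblack$ and meets with pure positive formulas; a nominal denotes a singleton $\{x\}$, and singletons need not lie in $\mathcal{D}_{\mathcal{K}}(X)$ (membership requires $\{x\}^{c}\in\mathcal{K}$, i.e.\ a compact open complement; the paper's footnote on nominals makes this point explicitly), and $\Diamondblack Y=R[Y]$ only preserves \emph{closedness} (Proposition \ref{Prop:RY:Closed}), not membership in $\mathcal{D}_{\mathcal{K}}(X)$. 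So Lemma \ref{Lemma:Closed} gives you $V'(\theta)\in C(X)$ and nothing more; the valuation $p\mapsto V'(\theta)$ is in general not admissible, and the argument collapses at precisely the step you flagged as ``the main obstacle.''

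The paper's Lemma \ref{aRight:Ack} avoids this by \emph{not} taking $V'(\theta)$ as the value of $p$: instead it produces some $Z\in\mathcal{D}_{\mathcal{K}}(X)$ with $V'(\theta)\subseteq Z$ that still satisfies all the inequalities. Concretely, one writes the closed set $V'(\theta)$ as a downward-directed intersection $\bigcap_i X_i$ of admissible sets, pushes the intersection through each $\mathsf{MinVal}'_q$ using Lemma \ref{Lemma:Intersection}, and then uses the compactness-style Lemma \ref{Lemma:Pseudo:Compactness:2} (for the inequalities $\mathsf{MinVal}'_q(\theta/p)\leq q$) and a direct argument (for $\theta\leq\cnomm_p$) to select finitely many $X_i$ whose intersection is the desired admissible $Z$. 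Your mention of Lemma \ref{Lemma:Pseudo:Compactness} gestures toward this, but you deploy it only as an afterthought rather than as the mechanism that replaces the (unavailable) claim of admissibility of $V'(\theta)$; note also that this over-approximation is exactly why the rule must be stated with the Ackermann condition ``there exists $Z\in A$ with $V'(\theta)\subseteq Z$'' rather than with the minimal valuation itself. Your easy direction ($S$ to $S'$ by monotonicity) is fine and matches the paper.
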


This rule is the most complicated rule in the soundness proof of the algorithm, and indeed this proof is different from existing settings due to that the admissible valuations are not clopen anymore. We devote the next subsection to this proof. 

\subsection{Right-handed Topological Ackermann Lemma}

\subsubsection{Analysis of the Right-handed Ackermann Rule}
We consider the application of the right-handed Ackermann rule. Before the application, each inequality in the system is of the following shape, where $p$ is the current $\Omega$-minimal propositional variable:

\begin{itemize}
\item $\mathsf{MinVal}'_{p,1}\leq p, \ldots, \mathsf{MinVal}'_{p,n}\leq p$
\item inequalities of the form $\mathsf{MinVal}'_q\leq q$, where $q\neq p$
\item inequalities of the form $p\leq\cnomm_p$
\item inequalities of the form $q\leq\cnomm_q$, where $q\neq p$
\item pure inequalities
\end{itemize}

here $\mathsf{MinVal}'_r\in\mathsf{C}_{r}$ for $r=p,q$, where $\mathsf{C}_{r}$ is defined as follows: 

$$\mathsf{C}_{r}::=\nomi\mid\top\mid s\mid\Diamondblack\mathsf{C}_{r}\mid\Box\mathsf{C}_{r}\mid\mathsf{C}_{r}\land\mathsf{C}_{r}\mid\mathsf{C}_{r}\lor\mathsf{C}_{r}$$

where $s$ is a propositional variable of dependence order below $r$. It is easy to see that each $\mathsf{MinVal}'_{p,i}$ is pure (since all propositional variables below $p$ are already eliminated), and $\mathsf{MinVal}'_q$ may or may not contain $p$.

Now denote $\bigvee_i\mathsf{MinVal}'_{p,i}$ as $\mathsf{V}_{p}$. After the application of the right-handed Ackermann rule, the system is of the following shape:

\begin{itemize}
\item inequalities of the form $\mathsf{MinVal}'_q(\mathsf{V}_{p}/p)\leq q$
\item inequalities of the form $\mathsf{V}_{p}\leq\cnomm_p$
\item inequalities of the form $q\leq\cnomm_q$
\item pure inequalities
\end{itemize}

It is easy to see that in the system, in each non-pure inequality, they are of the form $\mathsf{MinVal}'_q(\mathsf{V}_{p}/p)\leq q$ or $q\leq\cnomm_q$, which still fall in the categories described as before the application of the right-handed Ackermann rule. Also, the first application of the right-handed Ackermann rule is a special case of the situation described above.

Therefore, it suffices to show that for the system 

\begin{itemize}
\item $\mathsf{MinVal}'_{p,1}\leq p, \ldots, \mathsf{MinVal}'_{p,n}\leq p$
\item inequalities of the form $\mathsf{MinVal}'_q\leq q$, where $q\neq p$
\item inequalities of the form $p\leq\cnomm_p$
\item inequalities of the form $q\leq\cnomm_q$, where $q\neq p$
\item pure inequalities,
\end{itemize}

the application of the right-handed Ackermann rule on variable $p$ is sound with respect to admissible valuations. Indeed, we can consider the system as follows (without loss of generality we can take the join of the minimal valuations for $p$ to make them $\mathsf{V}_{p}$):\label{Page:Before:Ackermann}

\begin{itemize}
\item $\mathsf{V}_{p}\leq p$
\item inequalities of the form $\mathsf{MinVal}'_q(p)\leq q$, where $q\neq p$ and $\mathsf{MinVal}'_q(p)$ contains positive occurrences of $p$
\item inequalities of the form $p\leq\cnomm_p$
\item inequalities that does not contain $p$.
\end{itemize}

\subsubsection{Proof of Topological Ackermann Lemma}\label{SubSubSec:Top:Ack:Lemma}
In what follows we denote $\mathcal{D}_{\mathcal{K}}(X)$ as $A$, which means that we identify the generalized Boolean algebra $\mathcal{D}_{\mathcal{K}}(X)$ on $X$ and the dual generalized Boolena algebra $A$, as well as their elements. We use $C(X)$ to denote the set of closed elements in $X$. We also denote $\Diamondblack Y:=R[Y]$ and $\Box Y:=(R^{-1}(Y^{c}))^{c}$. Now we prove the following lemmas:

\begin{lemma}
\begin{enumerate}
\item For any $Y\in C(X)$, $Y$ is an intersection of a downward-directed collection of elements in $A$, i.e.\ $\forall Y\in C(X)$, $Y=\bigcap_i X_i$ for some downward-directed $\{X_i\}_{i\in I}\subseteq A$.
\item $A\subseteq C(X)$.
\item $\{x\}\in C(X)$ for any $x\in X$.
\item If $Y\in C(X)$, $\Diamondblack Y\in C(X)$.
\item $\Box\bigcap_i X_i=\bigcap\Box_i X_i$, for any $X_i\in P(X)$. 
\item If $Y\in C(X)$, then $\Box Y\in C(X)$.
\end{enumerate}
\end{lemma}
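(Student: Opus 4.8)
The plan is to dispatch the six items in order of increasing depth, handling (2)--(5) quickly and reserving (1) and (6) for the end, since (6) is where the previous items must be combined. Throughout I would work with the pointwise descriptions $\Diamondblack Y = R[Y] = \{w : \exists v\in Y,\ Rvw\}$ and $\Box Y = \{w : R(w)\subseteq Y\}$ with $R(w)=\{v:Rwv\}$, both immediate from the definitions $\Diamondblack Y := R[Y]$ and $\Box Y := (R^{-1}(Y^c))^c$. The standing observation I would use repeatedly is that, by definition of $A=\mathcal{D}_{\mathcal{K}}(X)$, a set $U$ lies in $A$ precisely when $U^c\in\mathcal{K}$; as the basis sets in $\mathcal{K}$ are open, every member of $A$ is closed.

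First I would record the easy items. Item (2) is exactly the standing observation. For item (3), I would invoke the separation (first) axiom of a generalized Stone space: for each $z\neq x$ it provides $U\in\mathcal{K}$ with $z\in U$ and $x\notin U$, so $\{x\}^c=\bigcup_{z\neq x}U_z$ is open and $\{x\}$ is closed. Item (4) is immediate from Proposition \ref{Prop:RY:Closed} once one notes $\Diamondblack Y = R[Y]$. Item (5) I would prove by a one-line set computation: $w\in\Box\bigcap_i X_i$ iff $R(w)\subseteq\bigcap_i X_i$ iff $R(w)\subseteq X_i$ for all $i$ iff $w\in\bigcap_i\Box X_i$; note that no topological assumption on the $X_i$ is used.

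For item (1), given $Y\in C(X)$ I would write the open set $Y^c$ as a union $\bigcup_{j\in J}K_j$ of basis elements $K_j\in\mathcal{K}$, then pass to the family $\{L_\alpha\}_\alpha$ of all finite unions of the $K_j$. By the union-closure axiom of a generalized Stone space each $L_\alpha\in\mathcal{K}$, this family is upward-directed, and $\bigcup_\alpha L_\alpha=Y^c$ still holds; taking complements yields a downward-directed family $\{L_\alpha^c\}_\alpha\subseteq A$ with $Y=\bigcap_\alpha L_\alpha^c$.

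The main point is item (6), and it is here that everything must be assembled. Given $Y\in C(X)$, I would use item (1) to write $Y=\bigcap_i X_i$ with $\{X_i\}_{i\in I}\subseteq A$, then apply item (5) to get $\Box Y=\bigcap_i\Box X_i$. Each $X_i\in\mathcal{D}_{\mathcal{K}}(X)$, so the defining clause of a generalized modal space ($\Box U\in\mathcal{D}_{\mathcal{K}}(X)$ whenever $U\in\mathcal{D}_{\mathcal{K}}(X)$) gives $\Box X_i\in A$, whence $\Box X_i\in C(X)$ by item (2); since an arbitrary intersection of closed sets is closed, $\Box Y\in C(X)$. The conceptual hurdle I expect here is precisely that $\Box$ cannot be seen to preserve closedness by any direct argument on $Y$ itself: the representation of $Y$ as an intersection of $A$-elements from item (1) is essential, and it is only because $\Box$ both commutes with intersections and sends $A$ into $A$ that the conclusion drops out. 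Notably, the downward-directedness furnished by item (1) is not needed for this step, though the representation itself is.
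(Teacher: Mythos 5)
Your proposal is correct and follows essentially the same route as the paper: items (2)--(5) are handled by the same direct observations (membership in $A$ means open complement, the separation axiom for singletons, Proposition \ref{Prop:RY:Closed}, and complete intersection-preservation of $\Box$), item (1) by closing the basis representation of $Y^c$ under finite unions --- which is exactly the paper's closure of the $A$-representation of $Y$ under finite intersections, read through complements --- and item (6) by combining (1) and (5) with the modal-space axiom that $\Box$ preserves $\mathcal{D}_{\mathcal{K}}(X)$. Your writeup merely makes explicit the steps the paper leaves implicit (in particular the use of $\Box A\subseteq A$ and item (2) inside item (6)).
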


\begin{proof}
\begin{enumerate}
\item By the fact that $\mathcal{K}$ is a basis of $X$, $C$ is an intersection of elements in $A$, i.e.\ $\forall Y\in C(X)$, $Y=\bigcap_i Y_i$ for some $\{Y_j\}_{j\in J}\subseteq A$. Now define $\{X_i\}_{i\in I}$ to be the set of all finite intersections of elements in $\{Y_j\}_{j\in J}$. Since $A$ is closed under taking finite intersections, it is easy to see that $\{X_i\}_{i\in I}$ is a subset of $A$ and in downward-directed.
\item Trivial.
\item By Definition \ref{Def:Tarski:Space}, singletons are closed.
\item By Proposition \ref{Prop:RY:Closed}.
\item By the fact that $\Box$ is completely intersection preserving.
\item An easy corollary of items 1 and 5.
\end{enumerate}
\end{proof}

\begin{lemma}\label{Lemma:Pseudo:Compactness:2}
Let $(X,\mathcal{K})$ be a generalized Stone space. Let $\{U_{i}:i\in I\}$ be non-empty downward-directed
family in $C(X)$ and $V\in A$ such that $\bigcap\{U_{i}:i\in I\}\subseteq V$, then there exists an $i\in I$ such that $U_i\subseteq V$.
\end{lemma}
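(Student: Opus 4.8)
The plan is to dualize the inclusion and then exploit that the elements of $\mathcal{K}$ are compact. Setting $W := \bigcap\{U_i : i\in I\}$, the hypothesis $W\subseteq V$ is equivalent to $V^c\subseteq\bigcup_{i\in I}U_i^c$. Since $V\in A=\mathcal{D}_{\mathcal{K}}(X)$ we have $V^c\in\mathcal{K}$, so by the compactness clause of Definition \ref{Def:Tarski:Space} the set $V^c$ is compact; and since each $U_i$ lies in $C(X)$ it is closed, so each $U_i^c$ is open. Hence $\{U_i^c:i\in I\}$ is a cover of the compact set $V^c$ by open sets of $X$.

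First I would extract, by compactness, a finite subfamily $U_{i_1},\ldots,U_{i_n}$ with $V^c\subseteq U_{i_1}^c\cup\cdots\cup U_{i_n}^c$; complementing again gives $U_{i_1}\cap\cdots\cap U_{i_n}\subseteq V$. Then, using that $\{U_i:i\in I\}$ is non-empty and downward-directed, a short induction on $n$ yields an index $i\in I$ with $U_i\subseteq U_{i_1}\cap\cdots\cap U_{i_n}$, and therefore $U_i\subseteq V$, which is the desired conclusion.

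I do not anticipate a genuine obstacle: the whole argument rests on recognizing that $V^c$ is a compact (open) set while the $U_i^c$ are open, so compactness of the basic sets immediately supplies a finite subcover, after which downward-directedness collapses the finite intersection to a single member of the family. The only point needing a little care is that compactness is being applied to the subset $V^c$ against a cover by open sets of the ambient space, which is precisely the content of the compactness clause, and that downward-directedness is used exactly once, at the final step.

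Should a proof internal to the pseudo-compactness machinery be preferred, one can instead reduce to Lemma \ref{Lemma:Pseudo:Compactness}: by item~1 of the preceding lemma each closed $U_i$ is a downward-directed intersection $\bigcap_{j}Y_{ij}$ of members of $A$, so $\bigcap_{i,j}Y_{ij}\subseteq V$ is an inclusion between families drawn from $A$. Lemma \ref{Lemma:Pseudo:Compactness} then returns finitely many $Y_{ij}$ whose intersection lies below $V$; since each such $Y_{ij}$ contains the corresponding $U_i$, finitely many $U_i$ already intersect below $V$, and downward-directedness again produces a single index. Here the only bookkeeping is to track which $U_i$ the selected $Y_{ij}$ come from.
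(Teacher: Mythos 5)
Your proposal is correct, and your primary argument takes a genuinely different route from the paper. The paper proves this lemma entirely inside its ``pseudo-compactness'' machinery: it writes each closed $U_i$ as an intersection $\bigcap_j X_{i,j}$ of members of $A$ (possible since $\mathcal{K}$ is a basis and $A$ consists of the complements of basis elements), feeds the resulting inclusion $\bigcap_{i,j}X_{i,j}\subseteq V$ into Lemma \ref{Lemma:Pseudo:Compactness} to extract finitely many $X_{i_k,j_k}$ with $X_{i_1,j_1}\cap\cdots\cap X_{i_n,j_n}\subseteq V$, observes that $U_{i_1}\cap\cdots\cap U_{i_n}$ is contained in that intersection, and finishes with downward-directedness --- this is precisely the alternative you sketch in your last paragraph, including the bookkeeping step you flag. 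Your main argument instead dualizes to $V^c\subseteq\bigcup_i U_i^c$ and applies ordinary topological compactness of the basic set $V^c\in\mathcal{K}$ against the open cover $\{U_i^c\}$; this is sound (you correctly handle the degenerate empty-subcover case via non-emptiness of the family, and the final directedness step is the same in both proofs). What your route buys is brevity and self-containedness: it needs only the basis-of-compact-subsets clause of Definition \ref{Def:Tarski:Space} and avoids Lemma \ref{Lemma:Pseudo:Compactness} altogether. What the paper's route buys is uniformity: the decomposition of closed sets into downward-directed intersections of $A$-elements and the reduction to Lemma \ref{Lemma:Pseudo:Compactness} are exactly the tools reused throughout the topological Ackermann lemma, so the proof stays within that toolkit.
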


\begin{proof}
For each $i\in I$, consider $U_i\in C(X)$, there is a collection $\{X_{i,j}\}_{j\in J_i}\subseteq A$ such that $U_i=\bigcap_j X_{i,j}$, therefore we have $\bigcap_{i,j}X_{i,j}\subseteq V$, by Lemma \ref{Lemma:Pseudo:Compactness}, there are $X_{i_1,j_1}, \ldots, X_{i_n,j_n}$ such that $X_{i_1,j_1}\cap\ldots\cap X_{i_n,j_n}\subseteq V$, therefore $U_{i_1}\cap\ldots\cap U_{i_n}\subseteq V$, by downward-directedness, there is an $i\in I$ such that $U_i\subseteq V$.
\end{proof}

\begin{lemma}
\begin{enumerate}
\item $\Diamondblack\bigcap_i X_i=\bigcap\Diamondblack_i X_i$ for any non-empty downward-directed $\{X_i\}_{i\in I}\subseteq A$.
\item $\Diamondblack\bigcap_i X_i=\bigcap\Diamondblack_i X_i$ for any non-empty downward-directed $\{X_i\}_{i\in I}\subseteq C(X)$.
\end{enumerate}
\end{lemma}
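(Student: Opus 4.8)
The plan is to prove both inclusions for each item, and to observe immediately that the inclusion $\Diamondblack\bigcap_i X_i\subseteq\bigcap_i\Diamondblack X_i$ is trivial: it follows from the monotonicity of $\Diamondblack=R[\cdot]$ and holds for an arbitrary family, directed or not. The whole content therefore sits in the reverse inclusion $\bigcap_i\Diamondblack X_i\subseteq\Diamondblack\bigcap_i X_i$, which I would establish by contraposition.

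For item 1, fix $w\notin\Diamondblack\bigcap_i X_i$. Since each $X_i\in A\subseteq C(X)$, the set $\bigcap_i X_i$ is closed, so by Proposition \ref{Prop:RY:Closed} the set $\Diamondblack\bigcap_i X_i=R[\bigcap_i X_i]$ is closed as well. As $w$ lies in its open complement, there is a $W\in A$ with $\Diamondblack\bigcap_i X_i\subseteq W$ and $w\notin W$: take $W=B^{c}$ for a basic open $B\in\mathcal{K}$ with $w\in B$ and $B\cap\Diamondblack\bigcap_i X_i=\emptyset$ (equivalently, use that every closed set is a downward-directed intersection of elements of $A$). Now I would use the residuation $\Diamondblack Y\subseteq Z$ iff $Y\subseteq\Box Z$, which is valid for all subsets because $\Box Z=\{v:R(v)\subseteq Z\}$: from $\Diamondblack\bigcap_i X_i\subseteq W$ it yields $\bigcap_i X_i\subseteq\Box W$, and $\Box W\in A$ by the definition of generalized modal space.

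This is exactly the hypothesis of the pseudocompactness Lemma \ref{Lemma:Pseudo:Compactness:2}: the family $\{X_i\}\subseteq C(X)$ is non-empty and downward-directed, and its intersection is contained in $\Box W\in A$, so there is an index $i_0$ with $X_{i_0}\subseteq\Box W$. Applying the residuation in the other direction gives $\Diamondblack X_{i_0}\subseteq W$, whence $w\notin\Diamondblack X_{i_0}$ since $w\notin W$; this contradicts $w\in\bigcap_i\Diamondblack X_i$ and completes item 1. I expect this conversion to be the main obstacle: because $\Diamondblack$ is a diamond it does \emph{not} preserve finite, let alone directed, intersections on the nose, so one cannot argue lattice-theoretically. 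The trick is to transfer the problem across the adjunction to $\Box$, which does preserve intersections and keeps us inside $A$, and only then invoke the topological compactness packaged in Lemma \ref{Lemma:Pseudo:Compactness:2}.

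For item 2, I would observe that the argument just given never used $X_i\in A$ beyond the two facts that $\{X_i\}\subseteq C(X)$ and that $\bigcap_i X_i$ is closed, both of which hold verbatim for a downward-directed family in $C(X)$ (indeed $A\subseteq C(X)$, so item 1 is a special case). Hence the identical contraposition proves item 2, and I would present it this way as it is shortest. Alternatively, item 2 reduces to item 1 by passing to the family $\mathcal{G}=\{Y\in A:X_i\subseteq Y\text{ for some }i\}$: using that each $X_i$ is a downward-directed intersection of elements of $A$, one checks that $\mathcal{G}$ is downward-directed with $\bigcap\mathcal{G}=\bigcap_i X_i$, so item 1 gives $\Diamondblack\bigcap_i X_i=\bigcap_{Y\in\mathcal{G}}\Diamondblack Y$, and the equality $\bigcap_{Y\in\mathcal{G}}\Diamondblack Y=\bigcap_i\Diamondblack X_i$ then follows from monotonicity, since every $Y\in\mathcal{G}$ lies above some $X_i$.
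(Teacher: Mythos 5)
Your proof is correct and follows essentially the same route as the paper's: both arguments rest on the closedness of $\Diamondblack\bigcap_i X_i$ (via Proposition \ref{Prop:RY:Closed}), the transfer across the residuation $\Diamondblack Y\subseteq Z \iff Y\subseteq\Box Z$ into $A$, and the pseudo-compactness lemmas. The only (harmless) differences are that you argue pointwise, separating $w$ from the closed set by a single $W\in A$, where the paper quantifies over all closed supersets and their representations as intersections of $A$-elements, and that you treat both items uniformly via Lemma \ref{Lemma:Pseudo:Compactness:2} (using $A\subseteq C(X)$) where the paper invokes Lemma \ref{Lemma:Pseudo:Compactness} directly for item 1.
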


\begin{proof}
\begin{enumerate}
\item The direction $\Diamondblack\bigcap_i X_i\subseteq\bigcap\Diamondblack_i X_i$ is easy. For the other direction, suppose there is a closed set $Y\subseteq X$ such that $\Diamondblack\bigcap_i X_i\subseteq Y$. Then there is a collection $\{Z_j\}_{j\in J}\subseteq A$ such that $Y=\bigcap_j Z_j$. Therefore, $\Diamondblack\bigcap_i X_i\subseteq Z_j$ for all $j$. Thus $\bigcap_i X_i\subseteq \Box Z_j$ for all $j$. By Lemma \ref{Lemma:Pseudo:Compactness} and the downward-directedness of $X_i$, we have that for all $j$ and some $k_{j}$ depending on $j$, $X_{k_{j}}\subseteq \Box Z_j,\mbox{ i.e., }\Diamondblack X_{k_{j}}\subseteq Z_j,$ so $\bigcap_i\Diamondblack X_{i}\subseteq Z_j\mbox{ for all }j,$ so $\bigcap_i\Diamondblack X_{i}\subseteq \bigcap_j Z_j=Y.$ Now take $Y=\Diamondblack\bigcap_i X_i$, we have $\bigcap_i\Diamondblack X_{i}\subseteq\Diamondblack\bigcap_i X_i.$
\item The direction $\Diamondblack\bigcap_i X_i\subseteq\bigcap\Diamondblack_i X_i$ is easy. For the other direction, suppose there is a closed set $Y\subseteq X$ such that $\Diamondblack\bigcap_i X_i\subseteq Y$. Then there is a collection $\{Z_j\}_{j\in J}\in A$ such that $Y=\bigcap_j Z_j$. Therefore, $\Diamondblack\bigcap_i X_i\subseteq Z_j$ for all $j$. Thus $\bigcap_i X_i\subseteq \Box Z_j$ for all $j$. By Lemma \ref{Lemma:Pseudo:Compactness:2} and the downward-directedness of $X_i$, we have that for all $j$ and some $k_j$ depending on $j$, $X_{k_j}\subseteq \Box Z_j,\mbox{ i.e., }\Diamondblack X_{k_j}\subseteq Z_j,$ so $\bigcap_i\Diamondblack X_{i}\subseteq Z_j\mbox{ for all }j,$ so $\bigcap_i\Diamondblack X_{i}\subseteq \bigcap_j Z_j=Y.$ Now take $Y=\Diamondblack\bigcap_i X_i$, we have $\bigcap_i\Diamondblack X_{i}\subseteq\Diamondblack\bigcap_i X_i.$
\end{enumerate}
\end{proof}

\begin{lemma}\label{Lemma:Closed}
For any formula $\alpha(\overline{q}, \overline{\nomi}, p)$ built up from nominals, $\top$, propositional variables using $\Diamondblack,\Box,\land,\lor$, consider the following valuation: $V(\overline{q})=\overline{Y},\mbox{ where }\overline{Y}\in A$, $V(\overline{\nomi})=\overline{\{x\}},\mbox{ where }x\in X$, $V(p)=Z,\mbox{ where }Z\in A$, then $V(\alpha(\overline{q}, \overline{\nomi}, p))\in C(X)$.
\end{lemma}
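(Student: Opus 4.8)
The plan is to prove Lemma~\ref{Lemma:Closed} by structural induction on the formula $\alpha$, using the closure properties of $C(X)$ assembled in the three preceding lemmas. The base cases are immediate: if $\alpha$ is $\top$ then $V(\top)=X\in A\subseteq C(X)$ by item~2 of the first lemma; if $\alpha$ is a propositional variable $q_i$ or $p$, then by hypothesis $V$ sends it into $A$, and again $A\subseteq C(X)$; if $\alpha$ is a nominal $\nomi_j$, then $V(\nomi_j)=\{x\}$ is a singleton, which is closed by item~3 of the first lemma. So in every base case $V(\alpha)\in C(X)$.

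For the inductive step I would treat the four connectives separately, each time invoking the induction hypothesis that the immediate subformulas have closed truth sets. For $\alpha=\beta\land\gamma$, we have $V(\alpha)=V(\beta)\cap V(\gamma)$, and the intersection of two closed sets is closed (this is just the topological fact that $C(X)$ is closed under finite intersections, since closed sets always are). For $\alpha=\Diamondblack\beta$, by the induction hypothesis $V(\beta)\in C(X)$, and item~4 of the first lemma gives $\Diamondblack Y\in C(X)$ whenever $Y\in C(X)$, so $V(\Diamondblack\beta)=\Diamondblack V(\beta)\in C(X)$. For $\alpha=\Box\beta$, similarly $V(\beta)\in C(X)$ by the induction hypothesis, and item~6 of the first lemma yields $\Box V(\beta)\in C(X)$.

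The one genuinely delicate case is disjunction, $\alpha=\beta\lor\gamma$, where $V(\alpha)=V(\beta)\cup V(\gamma)$. A finite union of closed sets is always closed in any topological space, so in fact this case is no harder than the others: from $V(\beta),V(\gamma)\in C(X)$ we conclude $V(\beta)\cup V(\gamma)\in C(X)$ directly. (One should double-check that $C(X)$ genuinely means topologically closed subsets here, as the notation suggests and as item~3 uses; then both $\cap$ and finite $\cup$ preserve membership for free.) Thus every connective preserves the property, and the induction goes through.

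I therefore expect \emph{no} serious obstacle in this particular lemma: all the real analytic work has been front-loaded into the earlier lemmas, especially item~4 ($\Diamondblack$ preserves closedness, resting on Proposition~\ref{Prop:RY:Closed}) and item~6 ($\Box$ preserves closedness, resting on items~1 and~5). The only points requiring care are purely bookkeeping: confirming that the restricted grammar for $\alpha$ (nominals, $\top$, propositional variables, and the operators $\Diamondblack,\Box,\land,\lor$, with \emph{no} implication and \emph{no} conominals) is exactly matched by the available closure properties, and that the valuation hypotheses ($V(\overline q),V(p)\in A$ and $V(\overline{\nomi})$ singletons) supply precisely the base cases needed. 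Since implication and conominals are excluded, we never need complementation, which would break closedness; this is the structural reason the lemma holds and is what will be used to guarantee that minimal valuations are closed in the topological Ackermann lemma.
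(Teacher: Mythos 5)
Your proposal is correct and follows exactly the route the paper takes: the paper's proof of Lemma~\ref{Lemma:Closed} is simply ``by induction on the complexity of $\alpha$,'' and your argument is precisely that induction spelled out, with the base cases handled by items 2 and 3 of the preceding lemma and the $\Diamondblack$ and $\Box$ steps by items 4 and 6. Your closing observation --- that excluding implication and conominals is what keeps complementation out and hence preserves closedness --- correctly identifies why the restricted grammar is essential.
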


\begin{proof}
By induction on the complexity of $\alpha(\overline{q}, \overline{\nomi}, p)$.
\end{proof}

\begin{lemma}\label{Lemma:Intersection}
For any formula $\alpha(\overline{q}, \overline{\nomi}, p)$ built up from nominals, $\top$, propositional variables using $\Diamondblack,\Box,\land,\lor$, for any $\overline{Y}\in A$ corresponding to $\overline{q}$, any $\overline{\{x\}}$ (where $x\in X$) corresponding to $\overline{\nomi}$, any downward-directed $\{Z_i\}_{i\in I}\subseteq C(X)$ corresponding to $p$, we have $\bigcap_i\beta(\overline{Y},\overline{\{x\}},Z_i)=\beta(\overline{Y},\overline{\{x\}},\bigcap_i Z_i).$\footnote{Here by $\beta(\overline{Y},\overline{\{x\}},Z_i)$ we mean $V(\beta)$ under the valuation where $V(\overline{q})=\overline{Y}$, $V(\overline{\nomi})=\overline{\{x\}}$, $V(p)=Z_i$.}
\end{lemma}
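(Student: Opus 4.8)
The plan is to prove the identity by induction on the complexity of the formula (the displayed equation writes $\beta$, which we read as the same formula $\alpha$), exploiting that every connective occurring in $\alpha$, namely $\Diamondblack,\Box,\land,\lor$, is monotone, and that the two modal connectives preserve the relevant intersections by the lemmas already established in this subsection. Throughout I abbreviate the truth set of a subformula $\alpha'$ under the valuation $V(\overline q)=\overline Y$, $V(\overline{\nomi})=\overline{\{x\}}$, $V(p)=Z$ simply as $\alpha'(Z)$.

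First I would record two preliminary observations used at every step. Observation (a): since $\alpha$ is built only from $\Diamondblack,\Box,\land,\lor$, each subformula is monotone in $p$; hence for any subformula $\alpha'$ and any downward-directed $\{Z_i\}_{i\in I}$ the family $\{\alpha'(Z_i)\}_{i\in I}$ is again downward-directed, because given $i,j$ we may pick $k$ with $Z_k\subseteq Z_i$ and $Z_k\subseteq Z_j$, and monotonicity yields $\alpha'(Z_k)\subseteq\alpha'(Z_i)$ and $\alpha'(Z_k)\subseteq\alpha'(Z_j)$. Observation (b): by Lemma \ref{Lemma:Closed} every $\alpha'(Z_i)$ lies in $C(X)$, so $\{\alpha'(Z_i)\}_{i\in I}$ is a downward-directed family of closed sets, which is precisely the hypothesis needed for the $\Diamondblack$-preservation lemma. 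For the base cases, if $\alpha$ is $\nomi$, $\top$, or a variable $q\neq p$, then $\alpha(Z_i)$ does not depend on $Z_i$, so both sides coincide; and if $\alpha=p$ both sides equal $\bigcap_i Z_i$.

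For the inductive step, the conjunction case $\alpha=\alpha_1\land\alpha_2$ is immediate from $\bigcap_i\bigl(\alpha_1(Z_i)\cap\alpha_2(Z_i)\bigr)=\bigl(\bigcap_i\alpha_1(Z_i)\bigr)\cap\bigl(\bigcap_i\alpha_2(Z_i)\bigr)$ together with the two inductive hypotheses. For $\alpha=\Box\alpha_1$ I would apply the item of the first lemma of this subsection stating that $\Box$ is completely intersection preserving, to the family $\{\alpha_1(Z_i)\}$, obtaining $\bigcap_i\Box\alpha_1(Z_i)=\Box\bigcap_i\alpha_1(Z_i)=\Box\alpha_1(\bigcap_i Z_i)$, where the last equality is the inductive hypothesis. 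For $\alpha=\Diamondblack\alpha_1$, observations (a) and (b) guarantee that $\{\alpha_1(Z_i)\}$ is a non-empty downward-directed family of closed sets, so the lemma asserting $\Diamondblack\bigcap_i X_i=\bigcap_i\Diamondblack X_i$ for such families applies, giving $\bigcap_i\Diamondblack\alpha_1(Z_i)=\Diamondblack\bigcap_i\alpha_1(Z_i)=\Diamondblack\alpha_1(\bigcap_i Z_i)$.

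The main obstacle is the disjunction case $\alpha=\alpha_1\lor\alpha_2$, since intersection does not distribute over union in general; this is exactly where downward-directedness is essential. Writing $A_i=\alpha_1(Z_i)$ and $B_i=\alpha_2(Z_i)$, the inclusion $\bigcap_i(A_i\cup B_i)\supseteq(\bigcap_i A_i)\cup(\bigcap_i B_i)$ is trivial, and for the reverse I would argue by contradiction: suppose $w\in\bigcap_i(A_i\cup B_i)$ but $w\notin\bigcap_i A_i$ and $w\notin\bigcap_i B_i$, pick $i_0$ with $w\notin A_{i_0}$ and $j_0$ with $w\notin B_{j_0}$, and use downward-directedness of $\{Z_i\}$ to find $k$ with $Z_k\subseteq Z_{i_0}$ and $Z_k\subseteq Z_{j_0}$; monotonicity (observation (a)) then gives $w\notin A_k$ and $w\notin B_k$, contradicting $w\in A_k\cup B_k$. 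Hence $\bigcap_i(A_i\cup B_i)=(\bigcap_i A_i)\cup(\bigcap_i B_i)$, and applying the two inductive hypotheses closes the case and completes the induction.
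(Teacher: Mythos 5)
Your proof is correct and takes essentially the same route as the paper, whose proof is simply ``by induction on the complexity of $\alpha$''; your handling of the disjunction case via downward-directedness and monotonicity, and of $\Diamondblack$ via the directed-intersection lemma for closed families, supplies exactly the details that induction needs. One small point to tighten: in observation (b) you cite Lemma \ref{Lemma:Closed}, which is stated only for $V(p)=Z\in A$, whereas here $Z_i\in C(X)$; the identical induction proves the closed-valued variant (since $\Diamondblack$, $\Box$, finite intersections and finite unions all preserve closedness), so you should either record that mild strengthening or note that in the intended application the $Z_i$ are in fact elements of $A$.
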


\begin{proof}
By induction on the complexity of $\alpha(\overline{q}, \overline{\nomi}, p)$.
\end{proof}

\begin{lemma}[Right-handed topological Ackermann lemma]\label{aRight:Ack}
Let $\theta$ be $\mathsf{V}_{p}$, $\eta_{i}(p)\leq\iota_{i}$ be $\mathsf{MinVal}'_{q}(p)\leq q$ or $p\leq\cnomm_p$ as described on page \pageref{Page:Before:Ackermann} for $1\leq i\leq m$. Then for any admissible valuation $V$, the following are equivalent:
\begin{enumerate}
\item
$V(\eta_i(\theta/p))\subseteq V(\iota_i)$ for $1\leq i\leq m$,
\item
there exists $Z\in A$ such that $V'(\theta)\subseteq Z$ and $V'(\eta_i(p))\subseteq V'(\iota_i)$ for $1\leq i\leq m$,  where $V'$ is the same as $V$ except that $V'(p)=Z$.
\end{enumerate}
\end{lemma}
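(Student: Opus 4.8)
The plan is to prove the two implications separately. The direction $(2)\Rightarrow(1)$ is routine and uses only monotonicity, while $(1)\Rightarrow(2)$ is the genuinely topological part: the obvious candidate $Z=V(\theta)$ is \emph{not} available, since $\theta=\mathsf{V}_{p}$ is pure and hence $V(\theta)$ is closed (by Lemma \ref{Lemma:Closed}) but need not lie in $A$. The whole point is to approximate $V(\theta)$ from above by a single member of $A$.

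For $(2)\Rightarrow(1)$: suppose $Z\in A$ witnesses (2). Because $\theta$ contains no propositional variable, $V(\theta)=V'(\theta)\subseteq Z=V'(p)$. Each $\eta_i$ is positive in $p$, hence its truth set is monotone in the interpretation of $p$; so by the semantics of substitution and monotonicity, $V(\eta_i(\theta/p))=V^{p}_{V(\theta)}(\eta_i)\subseteq V^{p}_{Z}(\eta_i)=V'(\eta_i(p))$. Since $\iota_i$ (equal to $q$ or to $\cnomm_p$) does not contain $p$, we have $V'(\iota_i)=V(\iota_i)$, whence $V(\eta_i(\theta/p))\subseteq V'(\iota_i)=V(\iota_i)$, which is (1).

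For $(1)\Rightarrow(2)$: since $V(\theta)\in C(X)$, write $V(\theta)=\bigcap_{k}X_k$ for a downward-directed family $\{X_k\}\subseteq A$ (first lemma of this subsection, item 1; note $X\in A$ guarantees the family is non-empty). Fix $i$. Freezing the other propositional variables of $\eta_i$ at their $V$-values in $A$ and the nominals at singletons, and letting $p$ range over $\{X_k\}\subseteq C(X)$, the semantics of substitution together with Lemma \ref{Lemma:Intersection} give $V(\eta_i(\theta/p))=V^{p}_{\bigcap_k X_k}(\eta_i)=\bigcap_k V^{p}_{X_k}(\eta_i)$; by Lemma \ref{Lemma:Closed} and monotonicity of $\eta_i$ in $p$, the family $\{V^{p}_{X_k}(\eta_i)\}_k$ is downward-directed in $C(X)$. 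Hypothesis (1) thus reads $\bigcap_k V^{p}_{X_k}(\eta_i)\subseteq V(\iota_i)$.

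Now split on the shape of $\iota_i$. If $\iota_i=q$ with $q\neq p$, then $V(\iota_i)=V(q)\in A$, so Lemma \ref{Lemma:Pseudo:Compactness:2} applies and yields an index $k_i$ with $V^{p}_{X_{k_i}}(\eta_i)\subseteq V(q)$. If $\iota_i=\cnomm_p$, then the matching $\eta_i$ is just $p$, and the inclusion becomes $\bigcap_k X_k=V(\theta)\subseteq V(\cnomm_p)=\{x_0\}^{c}$, where $x_0$ is the point with $V(\cnomm_p)=\{x_0\}^{c}$; here Lemma \ref{Lemma:Pseudo:Compactness:2} is \emph{not} available, because $\{x_0\}^{c}\notin A$ in general. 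This is exactly the main obstacle, the place where the non-clopen setting departs from the classical Ackermann argument, and it is resolved by a direct point-avoidance argument: $x_0\notin\bigcap_k X_k$ forces $x_0\notin X_{k_i}$ for some $k_i$, i.e.\ $X_{k_i}\subseteq\{x_0\}^{c}=V(\cnomm_p)$. Finally, collecting the finitely many indices $k_1,\dots,k_m$ and using downward-directedness, pick $X_{k^{\ast}}\subseteq X_{k_i}$ for all $i$ and set $Z:=X_{k^{\ast}}\in A$, so the variant $V'$ with $V'(p)=Z$ is again admissible. Then $V'(\theta)=V(\theta)=\bigcap_k X_k\subseteq X_{k^{\ast}}=Z$, and for each $i$, monotonicity of $\eta_i$ with $X_{k^{\ast}}\subseteq X_{k_i}$ gives $V'(\eta_i(p))=V^{p}_{X_{k^{\ast}}}(\eta_i)\subseteq V^{p}_{X_{k_i}}(\eta_i)\subseteq V(\iota_i)=V'(\iota_i)$, establishing (2).
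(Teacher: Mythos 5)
Your proposal is correct and follows essentially the same route as the paper: decompose the closed set $V(\theta)$ as a downward-directed intersection of members of $A$, push the intersection through $\eta_i$ via Lemma \ref{Lemma:Intersection}, apply Lemma \ref{Lemma:Pseudo:Compactness:2} for the inequalities with right-hand side $q$ and a point-avoidance argument for $p\leq\cnomm_p$, then combine the finitely many chosen approximants into a single $Z\in A$. The only cosmetic differences are that you fix one decomposition of $V(\theta)$ and invoke downward-directedness to get a common lower bound, whereas the paper re-decomposes per inequality and takes the finite intersection (using closure of $A$ under $\cap$); both steps are interchangeable.
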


\begin{proof}
2. to 1. is easy by monotonicity. 

For 1. to 2., since $p$ is the current $\Omega$-minimal propositional variable, $\theta$ is pure. By Lemma \ref{Lemma:Closed} and the shape of $\theta$, $V(\theta)=V'(\theta)\in C(X)$. Now we denote $V(q)$ as $Q$, then $Q\in A$, $V(\cnomm_p)=\{m\}^{c}$ for some $m\in X$.

Therefore, from $V(\mathsf{MinVal}'_{q}(\theta/p))\subseteq V(q)$, we have that $\mathsf{MinVal}'_{q}(V(\theta))\subseteq Q$\footnote{Here by $\mathsf{MinVal}'_{q}(V(\theta))$ we mean $V(\mathsf{MinVal}'_{q})$ where $V(p)=V(\theta)$.}, since $V(\theta)\in C(X)$, there is a downward-directed collection $\{X_i\}_{i\in I}\subseteq A$ such that $V(\theta)=\bigcap_i X_i$, so $\mathsf{MinVal}'_{q}(\bigcap_i X_i)\subseteq Q$. Now by Lemma \ref{Lemma:Intersection}, $\mathsf{MinVal}'_{q}(\bigcap_i X_i)=\bigcap_i\mathsf{MinVal}'_{q}(X_i)$, so $\bigcap_i\mathsf{MinVal}'_{q}(X_i)\subseteq Q$. Now by Lemma \ref{Lemma:Closed}, it is easy to see that $\{\mathsf{MinVal}'_{q}(X_i)\mid i\in I\}$ is a downward-directed set of elements in $C(X)$, so by Lemma \ref{Lemma:Pseudo:Compactness:2}, there is an $i\in I$ such that $\mathsf{MinVal}'_{q}(X_i)\subseteq Q$.

For $p\leq \cnomm_p$, we have that $V(\theta)\subseteq \{m\}^{c}$. Since $V(\theta)\in C(X)$, there are downward-directed $Y_j\in A$ ($j\in J$) such that $V(\theta)=\bigcap_j Y_j$, so $\bigcap_{j}Y_{j}\subseteq\{m\}^{c}$. So $m\notin\bigcap_{j}Y_{j}$, therefore there is a $j\in J$ such that $m\notin Y_{j}$, so $Y_{j}\subseteq \{m\}^{c}$.

Now take the intersection of all $X_i$s and $Y_j$s picked up for each $\eta_{i}(p)\leq\iota_{i}$, we get the $Z$ as desired (this is possible since $A$ is closed under taking finite intersections).
\end{proof}

Now from the right-handed topological Ackermann lemma, it is easy to see the soundness of the right-handed Ackermann rule when executed as described in the success proof.

\section{Examples}\label{Sec:Example}

Here we show the execution of an example from \cite[Proposition 6.1]{Ce22}:
\begin{example}
$\forall p(\top\leq\Box(\Box p\to p))$\\
$\forall p\forall \nomi_0\forall\cnomm_0(\nomi_0\leq\top\ \&\ \Box(\Box p\to p)\leq\cnomm_0\ \Rightarrow\ \nomi_0\leq\cnomm_0)$\\
$\forall p\forall \nomi_0\forall\cnomm_0\forall\cnomm_1(\nomi_0\leq\top\ \&\ \Box\cnomm_1\leq\cnomm_0\ \&\ \Box p\to p\leq\cnomm_1\ \Rightarrow\ \nomi_0\leq\cnomm_0)$\\
$\forall p\forall \nomi_0\forall\cnomm_0\forall\cnomm_1(\Box\cnomm_1\leq\cnomm_0\ \&\ \Box p\to p\leq\cnomm_1\ \Rightarrow\ \nomi_0\leq\cnomm_0)$\\
$\forall p\forall \nomi_0\forall \nomi_1\forall\cnomm_0\forall\cnomm_1\forall\cnomm_2(\Box\cnomm_1\leq\cnomm_0\ \&\ \nomi_1\to \cnomm_2\leq\cnomm_1\ \&\ \nomi_1\leq\Box p\ \&\ p\leq\cnomm_2\ \Rightarrow\ \nomi_0\leq\cnomm_0)$\\
$\forall p\forall \nomi_0\forall \nomi_1\forall\cnomm_0\forall\cnomm_1\forall\cnomm_2(\Box\cnomm_1\leq\cnomm_0\ \&\ \nomi_1\to \cnomm_2\leq\cnomm_1\ \&\ \Diamondblack\nomi_1\leq p\ \&\ p\leq\cnomm_2\ \Rightarrow\ \nomi_0\leq\cnomm_0)$\\
$\forall \nomi_0\forall \nomi_1\forall\cnomm_0\forall\cnomm_1\forall\cnomm_2(\Box\cnomm_1\leq\cnomm_0\ \&\ \nomi_1\to \cnomm_2\leq\cnomm_1\ \&\ \Diamondblack\nomi_1\leq\cnomm_2\ \Rightarrow\ \nomi_0\leq\cnomm_0)$\\
$\forall i_0\forall i_1\forall m_0\forall m_1\forall m_2(\Box(\{m_1\}^c)\subseteq\{m_0\}^c\land\{i_1\}^c\cup\{m_2\}^c\subseteq\{m_1\}^c\land \Diamondblack\{i_1\}\subseteq\{m_2\}^c\to \{i_0\}\subseteq\{m_0\}^c)$\\
$\forall i_0\forall i_1\forall m_0\forall m_1\forall m_2(Rm_0 m_1\land i_1=m_2=m_1\land \neg Ri_1 m_2\to i_0\neq m_0)$\\
$\forall i_0\forall i_1(Ri_0 i_1\to Ri_1 i_1)$
\end{example}

\bibliographystyle{abbrv}
\bibliography{Generalized_Boolean_Algebra}
\end{document}